
\documentclass[12pt, reqno]{amsart}
\usepackage{amsmath}
\usepackage{amsthm}
\usepackage{amssymb}
\usepackage{amsrefs}
\usepackage{mathrsfs}
\usepackage[usenames]{color}
\usepackage[utf8]{inputenc}
\usepackage{latexsym}
\usepackage{yhmath}
\usepackage{graphicx}
\usepackage{ifthen}

\usepackage{hyperref}


\newtheorem{thm}{}[section]
\newtheorem{theorem}[thm]{Theorem}

\newtheorem{definition}[thm]{Definition}
\newtheorem{lemma}[thm]{Lemma}
\newtheorem{proposition}[thm]{Proposition}

\theoremstyle{remark}
\newtheorem{remark}[thm]{Remark}

\newtheorem{question}[thm]{Question}

\numberwithin{equation}{section}
\allowdisplaybreaks

\newcommand{\Env}[2][]{%
\ifthenelse{ \equal{#1}{} }  
{\ensuremath{#2_{\mathsf{c}}}}  
{\ensuremath{#2_{\mathsf{c},#1}}}
}

\newcommand{\ee}{\ensuremath{\mathbf{e}}}

\newcommand{\BB}{\ensuremath{\mathcal{B}}}

\newcommand{\FF}{\ensuremath{\mathbb{F}}}

\newcommand{\RR}{\ensuremath{\mathbb{R}}}
\newcommand{\NN}{\ensuremath{\mathbb{N}}}

\newcommand{\XX}{\ensuremath{\mathbb{X}}}

\DeclareMathOperator{\supp}{supp}
\DeclareMathOperator{\sgn}{sign}

\author[P.\ M. Bern\'a]{Pablo M. Bern\'a}
\address{Pablo M. Bern\'a\\
	Departamento de Matem\'atica Aplicada y Estad\'istica, Facultad de Ciencias Econ\'omicas y Empresariales, Universidad San Pablo-CEU, CEU Universities\\ Madrid, 28003 Spain.}
\email{pablo.bernalarrosa@ceu.es}

\subjclass[2010]{46B15, 41A65}

\keywords{quasi-Banach spaces, greedy bases, quasi-greedy bases, greedy algorithm}

\usepackage{vmargin}

\setmargins{2.5cm}       
{1.5cm}                        
{16.5cm}                      
{23.42cm}                    
{10pt}                           
{1cm}                           
{0pt}                             
{2cm}                           
\begin{document}

\title[A note on partially-greedy bases in quasi-Banach spaces]{A note on partially-greedy bases in quasi-Banach spaces}

\begin{abstract} 
	We continue with the study of greedy-type bases in quasi-Banach spaces started in \cite{AABW}. In this paper, we study partially-greedy bases focusing our attention in two main results: 
	\begin{itemize}
		\item Characterization of partially-greedy bases in quasi-Banach spaces in terms of quasi-greediness and different conservative-like properties.
		\item Given a $C$-partially-greedy basis in a quasi-Banach space, there exists a ``renorming" such that the basis is $1$-partially-greedy. 
	\end{itemize}
\end{abstract}

\thanks{P.M. Bern\'a was partially supported by the grants MTM-2016-76566-P (MINECO, Spain) and 20906/PI/18 from Fundaci\'on S\'eneca (Regi\'on de Murcia, Spain).}

\maketitle
\section{Introduction}

For several years, how to represent a function $f$ in a  particular space $\XX$, has been considered a quite important problem in the area of Mathematical Analysis, that is, given $f\in\XX$, we want to obtain

$$f=\sum_{n=1}^\infty a_n \ee_n,$$
for a given basic functions $(\ee_n)_{n=1}^\infty$ and for a suitable scalars $a_n$ (coefficients).  In literature we can find different examples of such representations as for instance Fourier series of functions or Taylor expansions. In Functional Analysis, one consider expansions with regard to a basis, that is, we could consider that $\mathcal B=(\ee_n)_{n=1}^\infty$ is a Schauder or a Markushevich basis. 

One of the main goals of Approximation Theory is to find good approximations of $f$ in terms of finite sums. Concretely, we want to find algorithms of approximation  $(T_m)_{m=1}^\infty$, where 
$$T_m(f)=\sum_{n\in A}b_n \ee_n,$$
$A$ is a finite set of cardinality $m$ and $b_n$ could be different from the original coefficients of $f$. Moreover, we would like to have that $(T_m)_{m=1}^\infty$ produces a ``good approximation" (that could be interpreted taking into account our preferences).

Since 1999, in the field of Non-Linear Approximation, one of the most important algorithms that several researchers have studied is the  Greedy Algorithm $(G_m)_{m=1}^\infty$, where for an element $f$ in $\XX$, the algorithm selects the biggest coefficients of $f$ in modulus. This algorithm was introduced by S. V. Konyagin and V. N. Temlyakov in \cite{KoTe1999} and different and new properties about greedy approximation have been analyzed by different authors, among them S. J. Dilworth, N. J. Kalton, D. Kutzarova, P. Wojtaszczyk, etc (see \cite{DKK2003,DKKT2003,Wo2000}). In this paper, we focus our attention in the following property that was introduced in \cite{DKKT2003}: there exists a positive constant $C$ such that
\begin{eqnarray}\label{in}
\Vert f-G_m(f)\Vert \leq C\Vert f-S_m(f)\Vert,\; \forall f\in\XX, \forall m\in\NN,
\end{eqnarray}
where $S_m$ denotes the $m$th partial sum. The importance of this property resides in the fact that we want to compare if the non-linear approximation is better than  linear approximation. Here, we extend the main known results about \eqref{in} in Banach spaces for the case of quasi-Banach spaces, using recent results proved in \cite{AABW}.

The structure of the paper is the following: in Section \ref{two} we give the basic definitions about bases in quasi-Banach spaces and some operators. In Section \ref{thre} we give the definition of the Greedy Algorithm, we talk about some greedy-type bases and we analyze some properties about conservativeness. In Section \ref{four} we give the main characterization of partially-greedy bases in quasi-Banach spaces and, finally, in Section \ref{five}, we talk about renorming of quasi-Banach spaces using greedy-type bases.

\section{Preliminaries on quasi-Banach spaces}\label{two}

We say that the map $\Vert \cdot\Vert : \mathbb X\rightarrow\mathbb [0,+\infty)$ defined on a vector space $\XX$ over $\mathbb F=\RR$ or $\mathbb C$  is a \textbf{quasi-norm} if

\begin{itemize}
	\item[a)] $\Vert f\Vert>0$ for all $f\neq 0$,
	\item[b)] $\Vert t\, f\Vert = \vert t\vert \Vert f\Vert$, for all $t\in\FF$ and $f\in\XX$,
	\item[c)] there exists a positive constant $k$ such that for all $f,g\in\XX$,
	$$\Vert f+g\Vert \leq k\left(\Vert f\Vert+\Vert g\Vert\right).$$
\end{itemize}

Given $p\in (0,1]$, a \textbf{$p$-norm} is a map $\Vert \cdot\Vert : \XX\rightarrow [0,+\infty)$ satisfying a), b) and
\begin{itemize}
	\item[d)] $\Vert f+g\Vert^p \leq \Vert f\Vert^p + \Vert g\Vert^p$, for all $f,g\in\XX$.
\end{itemize}

Of course, d) implies c) with constant $k=2^{1/p-1}$. If $\Vert \cdot\Vert$ is a quasi-norm (resp. $p$-norm) on $\XX$ such that defines a complete metrizable topology, then $\XX$ is called \textbf{quasi-Banach space} (resp. \textbf{$p$-Banach space}). Thanks to the Aoki-Rolewicz's Theorem (see \cite{Aoki},\cite{Rolewicz}), any quasi-Banach space $\XX$ is $p$-convex, that is, 
$$\left\Vert \sum_{i=1}^n x_j\right\Vert \leq C\left(\sum_{j=1}^n \Vert x_j\Vert^p\right)^{1/p},\; n\in\mathbb N, x_j\in\mathbb X.$$
This way, $\mathbb X$ becomes $p$-Banach under a suitable renorming, for some $0<p\leq 1$. 
\subsection{Bases} 
Throughout this paper, a \textbf{basis} in a quasi-Banach space $\XX$ is a \textit{semi-normalized and total Markushevich basis}, i.e., $\mathcal B=(\ee_n)_{n=1}^\infty\subset\XX$ verifies the following conditions:
\begin{itemize}
	\item[i)] $[\ee_n : n\in\NN]=\XX$, (completion)
	\item[ii)] there exists a unique sequence $(\ee_n^*)_{n=1}^\infty\subset\XX^*$, called \textbf{biorthogonal functionals}, such that $\ee_n^*(\ee_m)=\delta_{n,m}$,
	\item[iii)] if $\ee_n^*(f)=0$ for all $n\in\NN$, then $f=0$, (totality) 
	\item[iv)] $\sup_n\max\lbrace\Vert \ee_n\Vert,\Vert\ee_n^*\Vert\rbrace<\infty$ (semi-normalization).
\end{itemize}

Thanks to \cite[Lemma 1.8]{AABW}, we now that if $f\in\XX$, $f=\sum_{n=1}^{\infty}\ee_n^*(f)\ee_n$, where  $\lim_{n\rightarrow+\infty}\ee_n^*(f)=0$ and the expansion is only formal but the assignment of coefficients is still unique. Also, we denote by $\supp(f)=\lbrace n\in\mathbb N : \ee_n^*(f)\neq 0\rbrace$.

Associated with a basis, we can consider the \textbf{projection operator} $P_A$, where for a finite set $A\subset\mathbb N$, 
$$P_A(f)=\sum_{n\in A}\ee_n^*(f)\ee_n.$$
It is well known that if $P_A$ is uniformly bounded, then $\mathcal B$ is called unconditional. 
We write $S_k := P_{\lbrace 1,...,k\rbrace}$ as the \textbf{$m$th partial sum}. Also, if there is a positive constant $C$ such that
$$\Vert S_k(f)\Vert\leq C \Vert f\Vert,\; \forall f\in\XX, \forall k\in\NN,$$
we say that $\mathcal B$ is a \textbf{Schauder basis}. Finally, if $A$ is a finite set, we denote by $\Psi_A$ the collection of signs $\varepsilon$:

$$\Psi_A:=\lbrace \varepsilon=(\varepsilon_n)_{n\in A} : \vert \varepsilon_n\vert=1,\; n\in A\rbrace,$$
and 
$\mathbf{1}_{\varepsilon A}[\mathcal B,\mathbb X]:=\mathbf{1}_{\varepsilon A}$ is the indicator sum:
$$\mathbf{1}_{\varepsilon A}=\sum_{n\in A}\varepsilon_n \ee_n,$$
where $\varepsilon \in\Psi_A$. If $\varepsilon\equiv 1$, we write $\mathbf{1}_A$. As usual, if $A, B\subset\mathbb N$ are finite sets, we denote by $A<B$ if $\max A <\min B$.
\subsection{$p$-convexity}
Consider, for $0<p\leq 1$, the following geometrical constants as in \cite{AABW}:
$$\mathbf{A}_p=\frac{1}{(2^p-1)^{1/p}},$$
and
$$\mathbf{B}_p = \left\{
\begin{array}{c l}
2^{1/p}\mathbf{A}_p & \text{if}\; \mathbb F=\RR,\\
4^{1/p}\mathbf{A}_p & \text{if}\; \mathbb F=\mathbb C.
\end{array}
\right.
$$
The following result is a a collection of two corollaries of \cite[Theorem 1.2]{AABW}, where this theorem  plays the role of a substitute of the Bochner integral in the case of $p$-Banach spaces, for $0<p\leq 1$.

\begin{proposition}[{\cite[Corollaries 1.3 and 1.4]{AABW}}]\label{conv}
Let $\mathcal B=(\ee_n)_{n=1}^\infty$ a basis in a $p$-Banach space and $J$ a finite set. Then,
\begin{itemize}
	\item[a)] For any scalars $(a_n)_{n\in J}$ with $0\leq a_n\leq 1$ and any $g\in\XX$, we have
\begin{eqnarray*}
\left\Vert g+\sum_{n\in J}a_n \ee_n\right\Vert\leq \mathbf{A}_p\sup\left\lbrace\left\Vert g+\sum_{n\in A}\ee_n\right\Vert : A\subseteq J\right\rbrace.
\end{eqnarray*}

\item[b)] For any scalars $(a_n)_{n\in J}$ with $\vert a_n\vert\leq  1$ and any $g\in\XX$, we have
\begin{eqnarray*}
\left\Vert g+\sum_{n\in J}a_n \ee_n\right\Vert\leq \mathbf{A}_p\sup\left\lbrace\left\Vert g+\sum_{n\in J}\varepsilon_n\ee_n\right\Vert : \vert\varepsilon_n\vert=1\right\rbrace.
\end{eqnarray*}

\item[c)] For any scalars $(a_n)_{n\in J}$ with $\vert a_n\vert\leq 1$, we have
$$\left\Vert \sum_{n\in J} a_n \ee_n\right\Vert \leq \mathbf{B}_p\sup_{A\subseteq J}\left\Vert\sum_{n\in A}\ee_n\right\Vert.$$
\end{itemize}
\end{proposition}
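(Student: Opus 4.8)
The plan is to reproduce, in the $p$-Banach setting, the classical averaging argument in which a vector with coefficients in $[0,1]$ is represented as a Bochner integral (equivalently, a convex combination) of $0$–$1$ vectors and one invokes $\left\Vert\int h\right\Vert\leq\int\Vert h\Vert$. That inequality is unavailable for $p<1$, so the key idea is to replace \emph{continuous} averaging by \emph{dyadic} averaging and to exploit $p$-subadditivity together with the arithmetic identity
\[
\mathbf{A}_p^p=\frac{1}{2^p-1}=\sum_{k=1}^\infty 2^{-kp},
\]
which is precisely what forces the constant $\mathbf{A}_p$ to appear. This is the discrete substitute for the Bochner integral alluded to after the statement.

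For a) I would expand each coefficient in binary, $a_n=\sum_{k=1}^\infty\epsilon_{n,k}2^{-k}$ with $\epsilon_{n,k}\in\{0,1\}$, and set $B_k:=\{n\in J:\epsilon_{n,k}=1\}\subseteq J$. Since $\sum_{k\ge1}2^{-k}=1$, I can distribute $g$ across the scales and write
\[
g+\sum_{n\in J}a_n\ee_n=\sum_{k=1}^\infty 2^{-k}\bigl(g+\mathbf{1}_{B_k}\bigr).
\]
Writing $M:=\sup\{\Vert g+\mathbf{1}_A\Vert:A\subseteq J\}$, $p$-subadditivity then gives $\Vert g+\sum_{n\in J}a_n\ee_n\Vert^p\leq\sum_{k\ge1}2^{-kp}\Vert g+\mathbf{1}_{B_k}\Vert^p\leq M^p\sum_{k\ge1}2^{-kp}=\mathbf{A}_p^pM^p$, and taking $p$-th roots yields the claim.

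For b) I would use a \emph{balanced} (signed) binary expansion $a_n=\sum_{k=1}^\infty\eta_{n,k}2^{-k}$ with $|\eta_{n,k}|=1$, after which the identical telescoping $g+\sum_{n\in J}a_n\ee_n=\sum_{k\ge1}2^{-k}\bigl(g+\sum_{n\in J}\eta_{n,k}\ee_n\bigr)$ and the same $p$-subadditivity estimate give $\mathbf{A}_p$, since each inner sum is of the form $g+\mathbf{1}_{\eta^{(k)}J}$ with $\eta^{(k)}\in\Psi_J$. The point to check is that such an expansion exists: in the real field the partial sums of $\sum_k(\pm1)2^{-k}$ sweep out all of $[-1,1]$, and in the complex field one expands $|a_n|\in[0,1]$ in balanced binary and multiplies every $\eta_{n,k}$ by $a_n/|a_n|$, so the whole disk is covered; hence the constant is $\mathbf{A}_p$ in both fields. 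Finally, c) I would deduce from a): split $a_n=\alpha_n+i\beta_n$ into real and imaginary parts (omitting this step when $\FF=\RR$), then split each real coefficient into positive and negative parts $\alpha_n^\pm\in[0,1]$, apply a) with $g=0$ to each resulting $[0,1]$-combination, and recombine by $p$-subadditivity; the bookkeeping contributes a factor $2^{1/p}$ in the real case and $4^{1/p}$ in the complex case, producing exactly $\mathbf{B}_p$.

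The routine but genuine obstacle is convergence and the passage to countable sums: I must first check that each infinite binary series converges in $\XX$ — which it does, since the coefficient part lives in the finite-dimensional space $\spn\{\ee_n:n\in J\}$ (so coefficientwise convergence is norm convergence) and the $g$-part is a geometric tail controlled by $\sum_k\Vert 2^{-k}g\Vert^p=\Vert g\Vert^p\sum_k2^{-kp}<\infty$ — and only then extend $p$-subadditivity from finite to countable sums by continuity of the norm. The one place requiring real thought, rather than bookkeeping, is the surjectivity of the signed-binary map in b), i.e.\ that $\{\sum_{k\ge1}\eta_k2^{-k}:|\eta_k|=1\}$ fills $[-1,1]$ (resp.\ $\overline{\mathbb{D}}$), which is what lets the same constant $\mathbf{A}_p$ serve in both the real and complex settings.
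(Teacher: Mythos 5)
Your proof is correct, and the constants come out exactly right: $\sum_{k\ge 1}2^{-kp}=\frac{1}{2^p-1}=\mathbf{A}_p^p$ drives a) and b), and the $2^{1/p}$ (real) and $4^{1/p}$ (complex) bookkeeping in c) matches the definition of $\mathbf{B}_p$. One caveat about the comparison you were asked for: this paper contains no proof of Proposition \ref{conv} at all --- it is imported verbatim from \cite{AABW} (Corollaries 1.3 and 1.4 there, deduced from Theorem 1.2 of that paper, the advertised ``substitute for the Bochner integral''), so the only meaningful benchmark is the source. Against that benchmark your argument is in substance the same mechanism, not a different route: extracting binary digits is exactly iterating the self-similar splitting $[0,1]^J=\tfrac12\{0,1\}^J+\tfrac12[0,1]^J$ (and your phase trick implements $\overline{\mathbb{D}}=\tfrac12\mathbb{T}+\tfrac12\overline{\mathbb{D}}$), with the geometric series $\sum_k 2^{-kp}$ producing the telltale constant $\mathbf{A}_p$; so you have re-derived the quoted result self-containedly. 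Your one flagged worry, surjectivity of the balanced expansion, is fine: $\sum_k\eta_k2^{-k}=2\sum_k\epsilon_k2^{-k}-1$ with $\epsilon_k=(1+\eta_k)/2\in\{0,1\}$ shows the range is all of $[-1,1]$. Two small repairs: in the complex case of b) the phase $a_n/\vert a_n\vert$ is undefined when $a_n=0$ --- take any unimodular phase there, since $0$ does admit a balanced expansion, e.g.\ $\tfrac12-\sum_{k\ge 2}2^{-k}=0$; and the convergence/countable-subadditivity discussion, while sound, can be bypassed entirely by keeping a finite remainder, since $g+\sum_{n\in J}a_n\ee_n-\sum_{k=1}^{K}2^{-k}\bigl(g+\mathbf{1}_{B_k}\bigr)=2^{-K}\bigl(g+\sum_{n\in J}c_n\ee_n\bigr)$ with $c_n\in[0,1]$, whose norm is $O(2^{-K})$ uniformly, so the estimate closes using only finitely many terms of the $p$-triangle inequality.
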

\section{The Greedy Algorithm and greedy-type bases}\label{thre}

For each $f\in \XX$ and $m\in \NN$, S. V. Konyagin and V. N. Temlyakov defined in \cite{KoTe1999} a \textbf{greedy sum} of $f$ of order $m$ by $$G_m(f) = \sum_{n=1}^me_{\pi(n)}^{*}(f)e_{\pi(n)},$$ where $\pi$ is a \textbf{greedy ordering}, that is, $\pi: \mathbb{N}\longrightarrow\mathbb{N}$  is a permutation such that $\supp(f)\subseteq \pi(\mathbb{N})$ and $\vert e^*_{\pi(i)}(f)\vert \geq \vert e^*_{\pi(j)}(f)\vert$ for $i\leq j$. The series $\sum_{n=1}^\infty \ee_{\pi(n)}^*(f)\ee_{\pi(n)}$ is called the \textbf{greedy series}. Also, $G_m(f)=P_A(f)$, where the set $A=\supp(G_m(f))$ is called a \textbf{greedy set} and satisfies that $\vert A\vert=m$ and 
$$\min_{n\in A}\vert\ee_n^*(f)\vert\geq \max_{n\not \in A}\vert\ee_n^*(f)\vert.$$
%

\subsection{Quasi-greedy and partially-greedy bases}

To study the convergence of the greedy algorithm, we consider the following bases introduced by S. V. Konyagin and V. N. Temlyakov.

\begin{definition}[\cite{KoTe1999}]
We say that a basis $\mathcal B$ in a quasi-Banach space $\XX$ is \textbf{quasi-greedy} if there is a positive constant $C$ such that for all $f\in\XX$,
\begin{eqnarray}\label{qg}
\Vert P_A(f)\Vert\leq C\Vert f\Vert,
\end{eqnarray}
whenever $A$ is a finite greedy set of $f$. The smallest constant verifying \eqref{qg} is called the \textbf{quasi-greedy constant} of the basis, it is denoted by $C_{qg}=C_{qg}[\mathcal B, \XX]$ and we say that $\mathcal B$ is $C_{qg}$-quasi-greedy.
\end{definition}

The following result was proved in \cite{Wo2000} and \cite{AABW}:

\begin{theorem}
	Let $\mathcal B$ a basis in a quasi-Banach space $\XX$. The following are equivalent:
	\begin{itemize}
		\item $\mathcal B$ is quasi-greedy.
		\item For every $f\in\XX$, the greedy series of $f\in\XX$ converges.
	\end{itemize}
\end{theorem}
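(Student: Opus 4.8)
The plan is to prove the two implications separately, handling the nonlinear greedy operators $G_m$ with the only tools available in the quasi-Banach setting: the quasi-triangle inequality c), completeness, totality, and the convexity estimates of Proposition \ref{conv}. For the forward direction, assume $\mathcal B$ is $C_{qg}$-quasi-greedy and show $G_m(f)\to f$ for every $f$. First I would record the companion bound for the complementary projections: since $f-G_m(f)=f-P_A(f)$ for the greedy set $A$ and $\Vert P_A(f)\Vert\le C_{qg}\Vert f\Vert$, inequality c) gives $\Vert f-G_m(f)\Vert\le k(1+C_{qg})\Vert f\Vert$, so the maps $f\mapsto f-G_m(f)$ are uniformly bounded as well. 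Then I would run a density argument: by i), pick a finitely supported $g=\sum_{n\in F}c_n\ee_n$ with $\Vert f-g\Vert<\epsilon$ and set $h:=f-g$. Using that the coefficients of $f$ tend to $0$ (the expansion in \cite[Lemma 1.8]{AABW}), for all large $m$ the greedy set $A_m$ of $f$ contains $F\cap\supp(f)$, so the residual $f-G_m(f)=P_{A_m^c}(f)$ is supported off $\supp(g)$ and hence equals $P_{R_m}(h)$, where $R_m:=\supp(f)\setminus A_m$ is a set on which every coefficient of $h$ has modulus at most $\min_{n\in A_m}\vert\ee_n^*(f)\vert$.

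The point of this reduction is that $R_m$ lies in the complement of a greedy set of $h$ whose threshold can be made as small as I like. Estimating $\Vert P_{R_m}(h)\Vert$ through the complementary-greedy bound together with Proposition \ref{conv} then yields $\Vert f-G_m(f)\Vert\le C\Vert h\Vert<C\epsilon$ with $C$ independent of $m$, and letting $\epsilon\to0$ gives convergence; by totality iii) the limit is forced to be $f$. I expect the delicate book-keeping here to be that arbitrary coordinate projections $P_A$ are \emph{not} bounded (that would be unconditionality), so one must keep $R_m$ inside the complement of a genuine greedy set and invoke Proposition \ref{conv}(c) to pass from unit to variable coefficients, rather than bounding $P_{R_m}$ head-on.

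For the converse, assume every greedy series converges; then for each fixed $f$ the sequence $(G_m(f))_m$ is convergent, hence bounded, so $\sup_m\Vert P_A(f)\Vert<\infty$ over the greedy sets $A$ of $f$. The goal is to upgrade this pointwise bound to the uniform bound \eqref{qg}. Since each $G_m$ is positively homogeneous but \emph{not} linear, the classical Banach--Steinhaus theorem does not apply, and I would instead argue by contradiction: if no uniform constant works, choose finitely supported norm-one vectors $f_j$ and greedy sets $A_j$ with $\Vert P_{A_j}(f_j)\Vert$ growing rapidly, and assemble them into a single $f=\sum_j\lambda_j f_j$ by a gliding-hump construction, placing the humps at disjoint, increasingly high coordinates and choosing $\lambda_j$ so that $p$-convexity (Aoki--Rolewicz, equivalently Proposition \ref{conv}) guarantees $f\in\XX$ while the greedy sums of $f$ inherit the blow-up and fail to be Cauchy, contradicting convergence.

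The main obstacle is this last step: executing the nonlinear uniform-boundedness argument. Both the selection of the offending vectors, so that their greedy sets survive inside the combined vector $f$ without interference from the other humps, and the summation of the humps into a convergent series require the quasi-Banach convexity estimates as a substitute for averaging and integration, which is precisely the role Proposition \ref{conv} (the stated substitute for the Bochner integral) is designed to play.
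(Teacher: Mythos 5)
The paper itself contains no proof of this theorem: it is imported verbatim from \cite{Wo2000} and \cite{AABW}, so your attempt can only be measured against the argument in those sources---and your plan reconstructs precisely that argument's architecture: a density-plus-truncation argument for quasi-greedy $\Rightarrow$ convergence, and, for the converse, a gliding-hump construction standing in for the uniform boundedness principle (correctly identified as unavailable, since the $G_m$ are nonlinear), with $p$-convexity securing summability of the assembled humps. Two spots in your sketch are too quick, both repairable along the lines of \cite{Wo2000}. First, in the forward direction, $P_{R_m}(h)$ is \emph{not} itself the projection onto the complement of a greedy set of $h$: setting $t_m:=\min_{n\in A_m}\vert\ee_n^*(f)\vert$ and $D_m:=\lbrace n:\vert\ee_n^*(h)\vert>t_m\rbrace$ (a greedy set of $h$, after perturbing to break ties), one has $P_{R_m}(h)=P_{D_m^c}(h)-P_{E_m}(h)$ where $E_m=D_m^c\setminus R_m$ is a subset of the fixed finite set $F=\supp(g)$ carrying coefficients of modulus at most $t_m$. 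The correction term is \emph{not} bounded by $C\Vert h\Vert$; instead Proposition \ref{conv}(c) gives $\Vert P_{E_m}(h)\Vert\leq \mathbf{B}_p\, t_m\sup_{A\subseteq F}\Vert\mathbf{1}_A\Vert$, which tends to $0$ as $m\to\infty$ because $t_m\to 0$ with $g$ fixed. So the correct conclusion is $\limsup_m\Vert f-G_m(f)\Vert\lesssim\epsilon$, not a bound $\leq C\Vert h\Vert$ uniform in $m$ as you state; the final limit $\epsilon\to 0$ then works as you intend.

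Second, in the converse, the step you leave unexecuted---and which is the real content---is that the offending vectors can be chosen finitely supported with supports beyond any prescribed $N$; a Markushevich basis admits no shift operator, so this must be argued, and the mechanism is not Proposition \ref{conv} but the boundedness of the finite-rank head projections: $P_{\lbrace 1,\dots,N\rbrace}$ is bounded (finitely many bounded functionals and semi-normalized vectors), and if $A$ is a greedy set of $f$ then $A\cap[N,\infty)$ is a greedy set of $P_{[N,\infty)}(f)$, so unboundedness of greedy projections passes to tails. Once that is in place, your scaling (e.g.\ $\Vert f_j\Vert=1$, $\Vert P_{A_j}(f_j)\Vert\geq 4^{j/p}$, $\lambda_j=2^{-j/p}$, coefficients across blocks strictly decreasing so the algorithm exhausts the blocks in order) makes $f=\sum_j\lambda_j f_j$ converge by $p$-convexity while consecutive greedy sums of $f$ differ by $\lambda_jP_{A_j}(f_j)$, which is unbounded, so the greedy series is not Cauchy---exactly the contradiction in Wojtaszczyk's proof. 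In short: right strategy, matching the cited proof, with these two technical lemmas still owed.
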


Consider the following weaker version of quasi-greediness that we need for our purposes.
\begin{definition}[\cite{AABW}]
	We say that a basis $\mathcal B$ in a quasi-Banach space $\XX$ is \textbf{quasi-greedy for largest coefficients} if there exists a positive constant $C$ such that
	\begin{eqnarray}\label{ql}
	\Vert \mathbf{1}_{\varepsilon A}\Vert\leq C\Vert f+\mathbf{1}_{\varepsilon A}\Vert,
	\end{eqnarray}
	for any finite set $A\subset \NN$, any $\varepsilon\in\Psi_A$ and any $f\in\XX$ such that $\supp(f)\cap A=\emptyset$ and $\max_{n\in \supp(f)}\vert\ee_n^*(f)\vert\leq 1$. The smallest constant verifying \eqref{ql} is denoted by $C_{ql}=C_{ql}[\BB,\XX]$ and we say that $\mathcal B$ is $C_{ql}$-quasi-greedy for largest coefficients. 
	Of course, $C_{ql}\leq C_{qg}$. 
\end{definition}

Since 1999, different types of convergence of the greedy algorithm have been studied by different authors introducing different greedy-type bases such as greedy bases and almost-greedy bases. These bases were introduced and characterized (in the context of Banach spaces) in \cite{KoTe1999, DKKT2003}. Recently, in \cite{AABW}, the authors characterized the same bases in the context of quasi-Banach spaces analyzing the lack of convexity in the results. Now, we study the characterization of partially-greedy bases.

\begin{definition}
	We say that a basis $\mathcal B$ in a quasi-Banach space $\XX$ is \textbf{partially-greedy} if there is a positive constant $C$ such that, for all $f\in\mathbb X$ and all finite greedy set $A$ of $f$,
	\begin{eqnarray}\label{pg}
	\Vert f-P_A(f)\Vert \leq C\inf_{k\leq \vert A\vert}\Vert f-S_k(f)\Vert.
	\end{eqnarray}
	The smallest constant verifying \eqref{pg} is called the \textbf{partially-greedy constant} of the basis, it is denoted by $C_{pg}=C_{pg}[\mathcal B, \XX]$ and we say that $\BB$ is $C_{pg}$-partially-greedy.
\end{definition}

\begin{remark}
In \cite{DKKT2003}, the authors defined partially-greedy as those bases where 
$$\Vert f-P_A(f)\Vert \lesssim \Vert f-S_m(f)\Vert.$$
In the case when $\mathcal B$ is a Schauder basis, both definitions are equivalent. We work with \eqref{pg} inspired by the results obtained recently in \cite{BBL}.

In the following subsections, we give and analyze the main tools that we need for the characterization of partially-greediness in the context of quasi-Banach spaces.
\end{remark}

\subsection{The truncation operator}\label{trunc}
The following definitions follow from the truncation operator introduced in \cite{DKK2003}. Take $f\in\XX$, $A\subseteq \NN$ finite and $\varepsilon\equiv\lbrace\sgn(\ee_n^*(f))\rbrace$. Define, for $f\in\XX$ and $A$ a finite greedy set of $f$,
$$\mathcal U(f,A)=\min_{n\in A}\vert\ee_n^*(f)\vert\mathbf{1}_{\varepsilon A},$$
$$\mathcal T(f,A)=\mathcal U(f,A) + P_{A^c}(f).$$

These operators are called restricted truncation operator and truncation operator, respectively. Write the following quantities:

$$\Gamma_u=\Gamma_u[\BB,\XX]=\sup\lbrace \Vert \mathcal U(f,A)\Vert: \Vert f\Vert\leq 1,\, A\, \text{greedy set of}\, f\rbrace,$$
$$\Gamma_t=\Gamma_t[\BB,\XX]=\sup\lbrace \Vert \mathcal \mathcal T(f,A)\Vert: \Vert f\Vert\leq 1,\, A\, \text{greedy set of}\, f\rbrace.$$
%
%
%
%

\begin{theorem}[{\cite[Theorem 3.13, Proposition 3.14]{AABW}}]\label{operators}
	Let $\mathcal B$ a quasi-greedy basis in a quasi-Banach space $\XX$. Then,
	\begin{itemize}
		\item The restricted truncation operator $\mathcal U$ is uniformly bounded, that is, $\Gamma_u<\infty$. Also, if $\XX$ is a $p$-Banach space we have that $\Gamma_u \leq C_{qg}^2\eta_p(C_{qg})$, where, for $u>0$,
		$$\eta_p(u)=\min_{0<t<1}(1-t^p)^{-1/p}(1-(1+\mathbf{A}_p^{-1}u^{-1}t)^{-p})^{-1/p},$$
		and $\eta_p(C_{qg})\lesssim C_{qg}^{1+1/p}$.
		\item The truncation operator $\mathcal T$ is uniformly bounded, that is, $\Gamma_t<\infty$. Also, if $\XX$ is a $p$-Banach space, we have that $\Gamma_t \leq C_{qg}(1+C_{qg}^p\eta_p^p(C_{qg}))^{1/p}$.
	\end{itemize}
\end{theorem}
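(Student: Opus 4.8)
The plan is to establish the two assertions in the stated order, treating the bound for the restricted truncation operator $\mathcal{U}$ as the substantial step and then deducing the bound for $\mathcal{T}$ by $p$-additivity. Throughout, fix $f\in\XX$ with $\|f\|\le 1$, a greedy set $A$ of $f$, the signs $\varepsilon_n=\sgn(\ee_n^*(f))$ and the threshold $\alpha=\min_{n\in A}|\ee_n^*(f)|$, so that $\mathcal{U}(f,A)=\alpha\mathbf{1}_{\varepsilon A}$ and $P_A(f)=G_{|A|}(f)$ is a greedy projection, whence $\|P_A(f)\|\le C_{qg}$ by quasi-greediness.

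For $\Gamma_u$ I would run a self-improvement (bootstrap) argument with a parameter $t\in(0,1)$, having first reduced to finitely supported $f$ so that $\Gamma_u<\infty$ a priori. Split the greedy set at the intermediate level $\alpha/t$: put $A'=\{n\in A:|\ee_n^*(f)|>\alpha/t\}$ and $A''=A\setminus A'$. On the large part, $A'$ is itself a greedy set of $f$ with threshold $\alpha'=\min_{n\in A'}|\ee_n^*(f)|\ge\alpha/t$, so that $\alpha\mathbf{1}_{\varepsilon A'}=(\alpha/\alpha')\,\mathcal{U}(f,A')$ with $\alpha/\alpha'\le t$, and hence $\|\alpha\mathbf{1}_{\varepsilon A'}\|\le t\,\Gamma_u\|f\|$; this is the source of the eventual factor $(1-t^p)^{-1/p}$. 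On the moderate part, every coefficient lies in $[\alpha,\alpha/t]$, so $\alpha\mathbf{1}_{\varepsilon A''}$ differs from a genuine greedy projection only through multipliers in $[t,1]$. Here I would invoke Proposition \ref{conv} --- the $p$-convex substitute for the Bochner integral --- to pass from such fractional multipliers to $\{0,1\}$- or sign-valued configurations while paying only the constant $\mathbf{A}_p$ (a naive $p$-triangle inequality would instead cost a divergent $|A|^{1-p}$), combine it with the quasi-greedy bounds $\|P_B(f)\|\le C_{qg}\|f\|$ on the nested threshold subsets, and apply a reverse $p$-triangle inequality, so as to obtain an estimate of the form $\|\alpha\mathbf{1}_{\varepsilon A''}\|\le C_{qg}^2\,(1-(1+\mathbf{A}_p^{-1}C_{qg}^{-1}t)^{-p})^{-1/p}\|f\|$; this is where the second factor of $\eta_p$ and the second power of $C_{qg}$ originate. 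Combining the two parts by $p$-additivity, $\|\mathcal{U}(f,A)\|^p\le\|\alpha\mathbf{1}_{\varepsilon A'}\|^p+\|\alpha\mathbf{1}_{\varepsilon A''}\|^p$, taking the supremum over admissible $(f,A)$ and solving
\[
\Gamma_u^p(1-t^p)\le C_{qg}^{2p}\bigl(1-(1+\mathbf{A}_p^{-1}C_{qg}^{-1}t)^{-p}\bigr)^{-1}
\]
for $\Gamma_u$ yields $\Gamma_u\le C_{qg}^2(1-t^p)^{-1/p}(1-(1+\mathbf{A}_p^{-1}C_{qg}^{-1}t)^{-p})^{-1/p}$; optimizing over $t$ gives exactly $\Gamma_u\le C_{qg}^2\eta_p(C_{qg})$, and inserting a fixed convenient $t$ and estimating the minimand gives the crude bound $\eta_p(C_{qg})\lesssim C_{qg}^{1+1/p}$.

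For $\mathcal{T}$ I would use the disjointly supported decomposition $\mathcal{T}(f,A)=P_{A^c}(f)+\mathcal{U}(f,A)$ together with $p$-additivity to get $\|\mathcal{T}(f,A)\|^p\le\|P_{A^c}(f)\|^p+\|\mathcal{U}(f,A)\|^p$. The first item bounds the second summand by $\Gamma_u^p\|f\|^p=C_{qg}^{2p}\eta_p^p(C_{qg})\|f\|^p$, while $\|P_{A^c}(f)\|=\|f-P_A(f)\|\le C_{qg}\|f\|$ is the bound for the projection onto the complement of a greedy set, itself a consequence of quasi-greediness. Adding the two gives $\|\mathcal{T}(f,A)\|^p\le C_{qg}^p(1+C_{qg}^p\eta_p^p(C_{qg}))\|f\|^p$, that is, the claimed $\Gamma_t\le C_{qg}(1+C_{qg}^p\eta_p^p(C_{qg}))^{1/p}$, and in particular $\Gamma_t<\infty$.

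The main obstacle is the bound on $\mathcal{U}$, and precisely the moderate-part estimate. The difficulty is intrinsic to the quasi-Banach setting: the only projections controlled a priori are those onto greedy, i.e. top-coefficient and hence nested, sets, whereas the obvious comparison of $\alpha\mathbf{1}_{\varepsilon A''}$ with $P_{A''}(f)$ generates suprema over arbitrary subprojections, which are not controlled (controlling them would amount to unconditionality). Proposition \ref{conv} is exactly the tool that routes this comparison through nested greedy projections at the sole cost of $\mathbf{A}_p$, and the reverse $p$-triangle inequality produces the factors of the form $(1-x^p)^{-1/p}$. Assembling these constants into the precise function $\eta_p$ is the delicate bookkeeping; the conceptual core is the bootstrap that plays a piece of $\Gamma_u$ at scale $t$ off against quasi-greedy projections and closes because $t<1$.
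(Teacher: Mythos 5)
First, a point of order: the paper contains no proof of this statement at all --- Theorem~\ref{operators} is imported verbatim from \cite{AABW} (Theorem 3.13 and Proposition 3.14 there), so your attempt can only be measured against the cited source rather than against anything in this note. Measured that way, your architecture is the right one: splitting the greedy set at the intermediate level $\alpha/t$, noting that $A'=\{n\in A:|\ee_n^*(f)|>\alpha/t\}$ is again a greedy set of $f$ so that the large part contributes $t\,\Gamma_u\|f\|$ (the source of $(1-t^p)^{-1/p}$ after $p$-additivity and absorption), routing the moderate part through Proposition~\ref{conv} instead of uncontrolled subprojections (you correctly identify that bounding arbitrary $P_D$, $D\subseteq A''$, would amount to unconditionality), and deducing the $\Gamma_t$ bound from $\mathcal{T}(f,A)=P_{A^c}(f)+\mathcal{U}(f,A)$ by the $p$-triangle inequality. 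Your bookkeeping reproduces exactly the stated constants, including the precise form of $\eta_p$.

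That said, there are genuine gaps. The main one is that the moderate-band inequality $\|\alpha\mathbf{1}_{\varepsilon A''}\|\le C_{qg}^2\bigl(1-(1+\mathbf{A}_p^{-1}C_{qg}^{-1}t)^{-p}\bigr)^{-1/p}\|f\|$ is where the entire analytic content of the theorem lives, and you assert it rather than prove it: listing ``Proposition~\ref{conv} plus quasi-greedy bounds on nested threshold sets plus a reverse $p$-triangle inequality'' does not exhibit the perturbed vector $y$ with $\|y\|\le(1+\mathbf{A}_p^{-1}C_{qg}^{-1}t)^{-1}\|x\|$ to which the reverse inequality $\|x\|^p\le\|x+y\|^p+\|y\|^p$ is to be applied, nor why the construction costs exactly $C_{qg}^2$; without that construction the proof is a skeleton with its load-bearing step missing. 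Second, your a priori finiteness reduction is wrong as stated: restricting to finitely supported $f$ does not make the supremum $\Gamma_u$ finite, so you cannot yet ``solve'' the bootstrap inequality for $\Gamma_u$. The standard repair is to work with the truncated constants $\Gamma_u^{(m)}$ (supremum over greedy sets of cardinality at most $m$), which are finite since $\alpha\|\mathbf{1}_{\varepsilon A}\|\le m^{1/p}\sup_n\|\ee_n\|\sup_n\|\ee_n^*\|\,\|f\|$, and to note that your splitting satisfies $A',A''\subseteq A$, so the recursion closes uniformly in $m$; you should say this. Finally, a constants caveat: with this paper's definition of $C_{qg}$ (which bounds $\|P_A(f)\|$ only), your step $\|f-P_A(f)\|\le C_{qg}\|f\|$ is not a direct consequence of quasi-greediness --- the $p$-triangle inequality only yields $(1+C_{qg}^p)^{1/p}\|f\|$ --- so your exact bound for $\Gamma_t$ tacitly uses the suppression-type convention of \cite{AABW}, and as written would need adjusting to match the definitions given here.
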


\begin{remark}
The estimate $\eta(C_{qg})\lesssim C_{qg}^{1+1/p}$ was given in \cite[Remark 3.9]{AABW}.
\end{remark}

\subsection{Properties about conservativeness}
Consider the following property introduced recently in \cite{BBL}.

\begin{definition}[\cite{BBL}]
We say that a basis $\mathcal B$ in a quasi-Banach space $\XX$ is \textbf{partially-symmetric for largest coefficients} if there exists a positive constant $C$ such that
\begin{eqnarray}\label{con}
\Vert f+\mathbf{1}_{\varepsilon A}\Vert \leq C\Vert f+\mathbf{1}_{\varepsilon' B}\Vert,
\end{eqnarray}
 for any pair of sets $A, B$, any $\varepsilon\in\Psi_A, \varepsilon'\in\Psi_B$ and any $f\in\XX$ such that $\vert A\vert\leq\vert B\vert$, $\max_{n\in \supp(f)}\vert\ee_n^*(f)\vert\leq 1$, $A<\supp(f)\cup B$ and $B\cap\supp(f)=\emptyset$. The smallest constant verifying \eqref{con} is denoted by $\Delta_{pl}=\Delta_{pl}[\BB,\XX]$ and we say that $\mathcal B$ is $\Delta_{pl}$-partially-symmetric for largest coefficients.
\end{definition}

\begin{definition}[\cite{DKKT2003}]
We say that a basis $\mathcal B$ in a quasi-Banach space $\XX$ is \textbf{super-conservative} if there exists a positive constant $C$ such that
\begin{eqnarray}\label{sup}
\Vert\mathbf{1}_{\varepsilon A}\Vert\leq C\Vert\mathbf{1}_{\varepsilon' B}\Vert,
\end{eqnarray}
for any pair of sets $A, B$ with $\vert A\vert\leq \vert B\vert$ and $A<B$, and any choice of signs $\varepsilon\in\Psi_A, \varepsilon'\in\Psi_B$. The smallest constant verifying \eqref{sup} is denoted by $\Delta_s=\Delta_s[\mathcal B, \mathbb X]$ and we say that $\mathcal B$ is $\Delta_s$-super-conservative.

If $\varepsilon\equiv\varepsilon'\equiv 1$ in \ref{sup}, we say that $\mathcal B$ is $\Delta$-\textbf{conservative}.
\end{definition}
Of course, $\Delta\leq \Delta_s$. In the following result, we characterize when $\mathcal B$ is partially-symmetric for largest coefficients.

\begin{proposition}\label{uni}
	Let $\mathcal B$ a basis in a quasi-Banach space $\XX$. 
	\begin{itemize}
		\item[a)] $\mathcal B$ is partially-symmetric for largest coefficients if and only if there exists a positive constant $\mathbf C$ such that
		\begin{eqnarray}\label{three}
		\Vert f\Vert \leq \mathbf C\Vert f-S_k(f)+\mathbf{1}_{\varepsilon B}\Vert,
		\end{eqnarray}
		for any finite set $B$, any sign $\varepsilon\in\Psi_B$, any element $f\in\mathbb X$ and any natural number $k$ such that $B\cap\supp(f)=\emptyset$, $k<\min B$ and $\max_{n\in \supp(f)}\vert\ee_n^*(f)\vert\leq 1$. Moreover, $\Delta_{pl}\leq \mathbf C\leq \mathbf{A}_p \Delta_{pl}$.
		\item[b)] $\mathcal B$ is partially-symmetric for largest coefficients if and only if $\mathcal B$ is super-conservative and quasi-greedy for largest coefficients. Moreover, if $\mathbb X$ is a $p$-Banach space,
		$$\Delta_s\leq \Delta_{pl},$$
		$$C_{ql}\leq (1+\Delta_{pl}^p)^{1/p},$$
		$$\Delta_{pl}\leq (1+(1+\Delta_s^p)C_{ql}^p)^{1/p}.$$
	\end{itemize}
\end{proposition}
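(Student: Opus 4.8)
The plan is to prove both parts by establishing chains of implications with explicit control of constants, exploiting the $p$-convexity inequalities of Proposition~\ref{conv} whenever I need to pass between indicator sums $\mathbf{1}_{\varepsilon A}$ and general tails. For part a), I would argue both directions directly from the definitions. To get \eqref{three} from partially-symmetry, I would fix $f$, $k$, $B$, $\varepsilon$ as in the hypothesis and split $f = S_k(f) + (f - S_k(f))$; the point is that $\supp(S_k(f)) \subseteq \{1,\dots,k\}$ lies entirely \emph{before} $\min B$ and before $\supp(f-S_k(f))$, so setting $A := \supp(S_k(f))$ and $g := f - S_k(f)$ puts me exactly in the configuration $A < \supp(g)\cup B$, $B\cap\supp(g)=\emptyset$. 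Applying \eqref{con} to $g$ with the sign pattern of $S_k(f)$ on $A$ (after using Proposition~\ref{conv}~a) or b) to replace the genuine coefficients of $S_k(f)$, which have modulus $\leq 1$ on $A$ by the normalization, with a $\pm1$ indicator) yields $\|f\| = \|g + S_k(f)\| \leq \mathbf{A}_p\,\Delta_{pl}\,\|g+\mathbf{1}_{\varepsilon B}\|$. This simultaneously proves the implication and the upper bound $\mathbf{C}\leq \mathbf{A}_p\Delta_{pl}$. For the converse and the lower bound $\Delta_{pl}\leq\mathbf{C}$, I would start from $\|f+\mathbf{1}_{\varepsilon A}\|$ with $|A|\leq|B|$, $A<\supp(f)\cup B$, and apply \eqref{three} to the element $f+\mathbf{1}_{\varepsilon A}$ with $k = \max A$: since $A < \supp(f)\cup B$, we have $S_k(f+\mathbf{1}_{\varepsilon A}) = \mathbf{1}_{\varepsilon A}$, so $\|f+\mathbf{1}_{\varepsilon A}\| \leq \mathbf{C}\,\|f + \mathbf{1}_{\varepsilon B}\|$, where $|A|\leq |B|$ lets me absorb any cardinality mismatch harmlessly.

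For part b), the forward direction (partially-symmetric $\Rightarrow$ super-conservative and quasi-greedy for largest coefficients) is obtained by specialization. Taking $f=0$ in \eqref{con} immediately gives super-conservativeness, and in fact the ordering hypothesis $A<B$ together with $A<\supp(f)\cup B$ reconciles, so $\Delta_s\leq\Delta_{pl}$. Quasi-greediness for largest coefficients requires the estimate $\|\mathbf{1}_{\varepsilon A}\|\leq C\|f+\mathbf{1}_{\varepsilon A}\|$ for $\supp(f)\cap A=\emptyset$, $\max|\ee_n^*(f)|\leq 1$; here I would bound $\|\mathbf{1}_{\varepsilon A}\|^p \leq \|f+\mathbf{1}_{\varepsilon A}\|^p + \|f\|^p$ using $p$-subadditivity and then control $\|f\|$ by \eqref{three} (with $B=A$ and $k$ small) or by a super-conservative comparison, producing $C_{ql}\leq(1+\Delta_{pl}^p)^{1/p}$.

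The reverse direction (super-conservative and quasi-greedy for largest coefficients $\Rightarrow$ partially-symmetric), yielding the last inequality $\Delta_{pl}\leq (1+(1+\Delta_s^p)C_{ql}^p)^{1/p}$, is the main obstacle and the step I would spend most care on. The difficulty is that I must reconstruct the full partially-symmetric inequality, which involves a nonzero $f$, from two properties that individually handle only indicator sums or only the largest-coefficient domination. The strategy is: estimate $\|f+\mathbf{1}_{\varepsilon A}\|^p \leq \|f\|^p + \|\mathbf{1}_{\varepsilon A}\|^p$ by $p$-subadditivity, bound the second term using super-conservativeness ($\|\mathbf{1}_{\varepsilon A}\|\leq\Delta_s\|\mathbf{1}_{\varepsilon' B}\|$, legitimate since $|A|\leq|B|$ and $A<B$), and then dominate $\|\mathbf{1}_{\varepsilon' B}\|$ by $\|f+\mathbf{1}_{\varepsilon' B}\|$ via quasi-greediness for largest coefficients (the hypotheses $B\cap\supp(f)=\emptyset$ and $\max|\ee_n^*(f)|\leq 1$ are exactly what \eqref{ql} demands). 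Combining these with the factor-of-$p$ bookkeeping from the $p$-norm inequality gives $\|f+\mathbf{1}_{\varepsilon A}\|^p \leq \|f\|^p + (1+\Delta_s^p)C_{ql}^p\|f+\mathbf{1}_{\varepsilon' B}\|^p$, and a final application of quasi-greediness for largest coefficients to absorb the leftover $\|f\|^p$ term into $\|f+\mathbf{1}_{\varepsilon' B}\|^p$ closes the estimate. The delicate point is ensuring that every comparison respects the support-ordering and disjointness constraints so that each invoked inequality is applied in a legitimate configuration.
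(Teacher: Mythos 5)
Your proposal follows essentially the same route as the paper in every step: in a) the same substitution $f'=f+\mathbf{1}_{\varepsilon A}$, $k=\max A$, $S_k(f')=\mathbf{1}_{\varepsilon A}$ in one direction, and Proposition~\ref{conv}~b) to trade the coefficients of $S_k(f)$ (of modulus $\le 1$) for sign indicators followed by \eqref{con} in the other (the paper takes $A=\{1,\dots,k\}$ where you take $\supp(S_k(f))$, an immaterial difference); and in b) the same specialization $f=0$ for $\Delta_s\le\Delta_{pl}$ and the same $p$-subadditivity chains, arriving at the same constants. One aside of yours is spurious but harmless: in the reverse direction of a) no cardinality condition $|A|\le|B|$ is needed (nor invoked by the paper).

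Two small slips in the constant bookkeeping are worth flagging. First, for $C_{ql}\le(1+\Delta_{pl}^p)^{1/p}$ your proposed mechanisms do not quite deliver the stated constant: a ``super-conservative comparison'' cannot control $\Vert f\Vert$ at all (it only compares indicator sums), and \eqref{three} with ``$k$ small'' requires $S_k(f)=0$ (so $k<\min\supp(f)$, not guaranteed for $k\ge 1$) and in any case costs the extra factor $\mathbf{A}_p$, giving only $C_{ql}\le(1+\mathbf{A}_p^p\Delta_{pl}^p)^{1/p}$. The paper's move is to apply \eqref{con} directly with the empty set in place of $A$ and with $B$ taken to be the set $A$ of \eqref{ql}, which yields $\Vert f\Vert\le\Delta_{pl}\Vert f+\mathbf{1}_{\varepsilon A}\Vert$ and hence the sharp bound. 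Second, in the reverse direction of b) your intermediate inequality $\Vert f+\mathbf{1}_{\varepsilon A}\Vert^p\le\Vert f\Vert^p+(1+\Delta_s^p)C_{ql}^p\Vert f+\mathbf{1}_{\varepsilon'B}\Vert^p$ has the factor $(1+\Delta_s^p)$ appearing prematurely; the correct intermediate is $\Vert f\Vert^p+\Delta_s^pC_{ql}^p\Vert f+\mathbf{1}_{\varepsilon'B}\Vert^p$, and then absorbing $\Vert f\Vert^p\le(1+C_{ql}^p)\Vert f+\mathbf{1}_{\varepsilon'B}\Vert^p$ (via $p$-subadditivity and \eqref{ql}, as you indicate) recovers exactly $1+(1+\Delta_s^p)C_{ql}^p$; as written, your accounting would give the slightly worse $1+(2+\Delta_s^p)C_{ql}^p$. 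Both are fixable in one line and do not affect the structure of the argument, which matches the paper's.
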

\begin{proof}
	First of all, we prove a). Assume \eqref{three} and to show that $\mathcal B$ is partially-symmetric for largest coefficients, take $A, B, f, \varepsilon$ and $\varepsilon'$ as in \ref{con}. Define $f':=f+\mathbf{1}_{\varepsilon A}$ and take $k:=\max A$. Thus,
	\begin{eqnarray}
	S_k(f+\mathbf{1}_{\varepsilon A})=\mathbf{1}_{\varepsilon A}.
	\end{eqnarray}
	Hence, applying \eqref{three},
	\begin{eqnarray*}
	\Vert f+\mathbf{1}_{\varepsilon A}\Vert = \Vert f'\Vert &\leq& \mathbf C\Vert f'-S_k(f')+\mathbf{1}_{\varepsilon' B}\Vert\\
	&=&\mathbf C\Vert f+\mathbf{1}_{\varepsilon A}-S_k(f+\mathbf{1}_{\varepsilon A})+\mathbf{1}_{\varepsilon'B}\Vert\\
	&=&\mathbf C\Vert f+\mathbf{1}_{\varepsilon'B}\Vert.
	\end{eqnarray*}

\noindent Thus, $\BB$ is $\Delta_{pl}$-partially-symmetric for largest coefficients with $\Delta_{pl}\leq \mathbf C$. Assume now that $\mathcal B$ is $\Delta_{pl}$-partially-symmetric for largest coefficients. Take $B, f, k$ and $\varepsilon$ as in \eqref{three}, and define $A=\lbrace 1,...,k\rbrace$ with  $k<\min B$. Hence, using Proposition \ref{conv},
\begin{eqnarray*}
\Vert f\Vert=\Vert f-S_k(f)+S_k(f)\Vert &\leq& \mathbf{A}_p\sup\lbrace\Vert f-S_k(f)+\mathbf{1}_{\varepsilon A}\Vert: \varepsilon\in \Psi_A\rbrace\\
&\stackrel{\eqref{con}}{\leq}& \mathbf{A}_p \Delta_{pl}\Vert f-S_k(f)+\mathbf{1}_{\varepsilon' B}\Vert.
\end{eqnarray*}
The proof of a) is done.

Prove now b). Assume that $\mathcal B$ is $\Delta_{pl}$-partially-symmetric for largest coefficients. Take $f, A, \varepsilon\in\Psi_A$ as in \eqref{ql}. Then,
\begin{eqnarray*}
	\Vert \mathbf{1}_{\varepsilon A}\Vert^p&\leq& \Vert f+\mathbf{1}_{\varepsilon A}\Vert^p+\Vert f\Vert^p\\
	&\stackrel{\eqref{con}}{\leq}& \Vert f+\mathbf{1}_{\varepsilon A}\Vert^p+\Delta_{pl}^p\Vert f+\mathbf{1}_{\varepsilon A}\Vert^p\\
	&=&(1+\Delta_{pl}^p)\Vert f+\mathbf{1}_{\varepsilon A}\Vert^p.
\end{eqnarray*}
Thus, $\mathcal B$ is $C_{ql}$-quasi-greedy for largest coefficients with $C_{ql}\leq (1+\Delta_{pl}^p)^{1/p}$. The fact that $\mathcal B$ is super-conservative with $\Delta_s\leq \Delta_{pl}$ follows from the definition.

\noindent Assume now that $\mathcal B$ is $\Delta_s$-super-conservative and $C_{ql}$-quasi-greedy for largest coefficients. Take $f, A, B, \varepsilon, \varepsilon'$ as in \eqref{con}.
\begin{eqnarray*}
	\Vert f+\mathbf{1}_{\varepsilon A}\Vert^p&\leq& \Vert f+\mathbf{1}_{\varepsilon' B}\Vert^p+\Vert\mathbf{1}_{\varepsilon A}\Vert^p+\Vert\mathbf{1}_{\varepsilon' B}\Vert^p\\
	&\stackrel{\eqref{sup}}{\leq}& \Vert f+\mathbf{1}_{\varepsilon' B}\Vert^p+(1+\Delta_s^p)\Vert\mathbf{1}_{\varepsilon' B}\Vert^p\\
	&\stackrel{\eqref{ql}}{\leq}& \Vert f+\mathbf{1}_{\varepsilon' B}\Vert^p+(1+\Delta_s^p)C_{ql}^p\Vert f+\mathbf{1}_{\varepsilon' B}\Vert^p\\
	&=&(1+(1+\Delta_s^p)C_{ql}^p)\Vert f+\mathbf{1}_{\varepsilon' B}\Vert^p
\end{eqnarray*}
Thus, $\mathcal B$ is $\Delta_{pl}$-partially-symmetric for largest coefficients with $\Delta_{pl}\leq (1+(1+\Delta_s^p)C_{ql}^p)^{1/p}$ and the proof is over.
\end{proof}

\begin{question}
Is it possible to characterize super-conservativeness using conservativeness? That is, is there any property $\mathbf{X}$ such that if $\BB$ is conservative with the Property $\mathbf X$ then $\mathcal B$  is super-conservative? This question is still open in the case of Banach spaces.
\end{question}

\section{Characterization of partially-greediness}\label{four}
The following characterization will be useful to talk in the following section about renormings. Also, this theorem is given using a similar property that can be found in \cite{BB} and \cite{AABW}.
\begin{theorem}\label{th2}
Let $\mathcal B$ a basis in a quasi-Banach space $\mathbb X$. $\mathcal B$ is partially-greedy if and only if there exists a positive constant $\mathbf{D}$ such that
\begin{eqnarray}\label{one}
\Vert f\Vert \leq \mathbf{D} \Vert f-S_k(f)+z\Vert,
\end{eqnarray}
for every $f,z\in \XX$ and $k\in\NN$ such that $\vert\supp(z)\vert<\infty$, $k<\min \supp(z)$, $k\leq \vert\supp(z)\vert$, $\supp(f)\cap\supp(z)=\emptyset$ and $\max_{n\in \supp(f)}\vert\ee_n^*(f)\vert\leq \min_{n\in\supp(z)}\vert\ee_n^*(z)\vert.$
Moreover, $\mathbf{D}=C_{pg}$.
\end{theorem}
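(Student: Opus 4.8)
The plan is to prove the equivalence by establishing both implications, exploiting the close structural parallel between \eqref{one} here and \eqref{three} in Proposition \ref{uni}, and keeping careful track of the constants so as to obtain the sharp identity $\mathbf D = C_{pg}$.

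First I would prove that \eqref{one} implies partially-greediness. Take $f \in \XX$ and a finite greedy set $A$ of $f$. The natural move is to apply \eqref{one} to the element $g := f - P_A(f)$ rather than to $f$ itself. For each $k \le |A|$, I would choose the perturbation $z$ to be the ``reinjected'' piece of the partial sum, namely the coordinates of $f$ in $\{1,\dots,k\}$ that were \emph{not} killed by the greedy projection; more precisely, set $z := S_k(f) - P_A(S_k(f)) = \sum_{n \le k,\, n \notin A} \ee_n^*(f)\ee_n$ so that $g - S_k(g) + z = f - S_k(f)$. The hypotheses of \eqref{one} must then be checked: $\supp(z)$ is disjoint from $\supp(g - S_k(g))$, we have $k < \min\supp(z)$ is \emph{not} automatic, so I expect the indexing to require some care (this is where the greedy-set inequality $\min_{n\in A}|\ee_n^*(f)| \ge \max_{n\notin A}|\ee_n^*(f)|$ enters, guaranteeing the coefficient-domination condition $\max_{n\in\supp(g-S_k(g))}|\ee_n^*| \le \min_{n\in\supp(z)}|\ee_n^*|$). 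Applying \eqref{one} with $\mathbf D$ then yields $\|f - P_A(f)\| \le \mathbf D\,\|f - S_k(f)\|$ for each admissible $k$, and taking the infimum over $k \le |A|$ gives \eqref{pg} with $C_{pg} \le \mathbf D$.

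For the converse, I would assume $\mathcal B$ is $C_{pg}$-partially-greedy and derive \eqref{one}. Given $f, z, k$ as in the statement, the idea is to build a single vector $h := f + z$ whose greedy set of cardinality $|\supp(z)|$ is exactly $\supp(z)$: the coefficient-domination hypothesis $\max_{n\in\supp(f)}|\ee_n^*(f)| \le \min_{n\in\supp(z)}|\ee_n^*(z)|$ is precisely what forces the largest coefficients of $h$ to sit on $\supp(z)$, so $A := \supp(z)$ is a legitimate greedy set of $h$ with $P_A(h) = z$. Then $h - P_A(h) = f$, while for the particular index $k$ we have $h - S_k(h) = (f+z) - S_k(f)$; since $k < \min\supp(z)$ the partial sum $S_k$ does not touch $z$, so $S_k(h) = S_k(f)$ and hence $h - S_k(h) = f - S_k(f) + z$. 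Invoking \eqref{pg} for $h$ with this greedy set $A$ and using $k \le |A| = |\supp(z)|$ (guaranteed by the hypothesis $k \le |\supp(z)|$) gives $\|f\| = \|h - P_A(h)\| \le C_{pg}\,\|h - S_k(h)\| = C_{pg}\,\|f - S_k(f) + z\|$, which is \eqref{one} with $\mathbf D \le C_{pg}$.

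Combining the two bounds yields $\mathbf D = C_{pg}$. I expect the main obstacle to be bookkeeping rather than conceptual: verifying that all five side conditions in each definition (disjoint supports, the strict inequality $k < \min\supp$, the cardinality constraint $k \le |\supp(z)|$, and the coefficient-domination bound) are simultaneously satisfied under the chosen substitutions, and in particular confirming that the set I designate as a greedy set genuinely qualifies as one. Unlike the proof of Proposition \ref{uni}, no appeal to the $p$-convexity estimates of Proposition \ref{conv} should be needed here, since the argument is a direct translation between the two formulations and the constant transfers without loss in either direction.
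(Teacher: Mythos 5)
Your second implication is exactly the paper's argument: for $h:=f+z$, the domination hypothesis makes $A:=\supp(z)$ a greedy set of $h$ of cardinality $m=\vert\supp(z)\vert$ with $P_A(h)=z$, the condition $k<\min\supp(z)$ gives $S_k(h)=S_k(f)$, and $k\leq m$ makes $k$ admissible in the infimum of \eqref{pg}, whence $\Vert f\Vert=\Vert h-P_A(h)\Vert\leq C_{pg}\Vert f-S_k(f)+z\Vert$. That half is correct as written.

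The forward implication, however, contains a genuine error: you chose the wrong $z$. For $g:=f-P_A(f)$ one computes $S_k(g)=\sum_{n\leq k,\,n\notin A}\ee_n^*(f)\ee_n$, which is precisely your $z:=S_k(f)-P_A(S_k(f))$. Hence $g-S_k(g)+z=g=f-P_A(f)$, not $f-S_k(f)$, so \eqref{one} would only return the tautology $\Vert g\Vert\leq\mathbf{D}\Vert g\Vert$. The side conditions confirm that this $z$ is the wrong vector: $\supp(z)\subseteq\lbrace 1,\dots,k\rbrace\setminus A$, so $k<\min\supp(z)$ fails outright (this is the difficulty you flagged, but it cannot be repaired by re-indexing), and the coefficients of your $z$ sit \emph{outside} the greedy set, i.e.\ below the greedy threshold, so the greedy inequality gives domination in the wrong direction, contrary to your parenthetical claim. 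The paper's choice is the mirror image: $z:=P_A(f)-S_k(P_A(f))=P_{A\setminus\lbrace 1,\dots,k\rbrace}(f)$, the tail of the greedy projection beyond $k$. With that choice the algebra works, $g-S_k(g)+z=f-S_k(f)$; moreover $k<\min\supp(z)$ and $\supp(g)\cap\supp(z)=\emptyset$ hold automatically, and since the coefficients of $z$ are now the large ones, $\min_{n\in A}\vert\ee_n^*(f)\vert\geq\max_{n\notin A}\vert\ee_n^*(f)\vert$ yields the domination condition. After this substitution the rest of your outline (applying the estimate at each $k\leq\vert A\vert$, or at the minimizing $k^*$ as the paper does, and concluding $\mathbf{D}=C_{pg}$ with no appeal to Proposition \ref{conv}) goes through. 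One caveat, since you announced you would verify all five side conditions: even with the correct $z$, the cardinality requirement $k\leq\vert\supp(z)\vert=\vert A\vert-\vert A\cap\lbrace 1,\dots,k\rbrace\vert$ is not automatic (e.g.\ $A\supseteq\lbrace 1,\dots,k\rbrace$ with $2k>\vert A\vert$); the paper passes over this silently (``Of course, $z$ and $f'$ verifies the conditions''), so a fully careful write-up should treat such $k$ separately.
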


\begin{proof}
Assume \eqref{one}. Let $f\in\XX$, $A$ a greedy set of $f$ with cardinality $m\in\NN$ and take $k^*\in\NN$ such that
$$\inf_{k\leq m}\Vert f-S_k(f)\Vert=\Vert f-S_{k^*}(f)\Vert.$$

\noindent Define the elements $f':=f-P_A(f)$ and $z= P_A(f)-S_{k^*}(P_A(f))$. Of course, $z$ and $f'$ verifies the conditions of \eqref{one}. Hence,
\begin{eqnarray*}
\Vert f-P_A(f)\Vert = \Vert f'\Vert &\leq& \mathbf D\Vert f'-S_{k^*}(f)+z\Vert\\
&=&\mathbf D\Vert f-P_A(f)-S_{k^*}(f-P_A(f))+P_A(f)-S_{k^*}(P_A(f))\Vert\\
&=&\mathbf D\Vert f-P_A(f)-S_{k^*}(f)+S_{k^*}(P_A(f))+P_A(f)-S_{k^*}(P_A(f))\Vert\\
&=&\mathbf D\Vert f-S_{k^*}(f)\Vert.
\end{eqnarray*}
Thus, since this estimate works for any greedy set $A$ and any $k\leq\vert A\vert$, the basis is $C_{pg}$-partially-greedy with $C_{pg}\leq \mathbf D$.

\noindent Assume now that $\mathcal B$ is $C_{pg}$-partially-greedy basis and show \eqref{one}. Take $f,z,$ and $k$ as in \eqref{one}. Considering the element $g:=f+z$, $\supp(z)$ is a greedy set of $g$ with $m=\vert\supp(z)\vert$. Hence,
\begin{eqnarray*}
\Vert f\Vert=\Vert g-z\Vert&\leq& C_{pg}\inf_{k\leq m}\Vert g-S_k(f+z)\Vert\\
&\leq& C_{pg}\inf_{k: k<\min \supp(z), k\leq m}\Vert g-S_k(f+z)\Vert\\
&\leq& C_{pg}\Vert f-S_k(f)+z\Vert,
\end{eqnarray*} 
where the last inequality holds for any $k$ as in \eqref{one}. Hence, \eqref{one} is proved and the proof is over.
\end{proof}

\begin{theorem}\label{th1}
Let $\mathcal B$ a basis in a quasi-Banach space $\XX$. The following are equivalent:
\begin{itemize}
	\item[i)] $\mathcal B$ is partially-greedy.
	\item[ii)] $\mathcal B$ is quasi-greedy and partially-symmetric for largest coefficients.
	\item[iii)] $\mathcal B$ is quasi-greedy and super-conservative.
	\item[iv)] $\mathcal B$ is partially-symmetric for largest coefficients and the truncation operator is uniformly bounded.
\end{itemize}
Moreover, $\Delta_{pl}\leq C_{pg}$ and, in the particular case when $\XX$ is a $p$-Banach space, we also have
$$C_{pg}\leq \mathbf A_p\Delta_{pl}\Gamma_t,\;\; C_{qg}\leq 2^{1/p}C_{pg}.$$
\end{theorem}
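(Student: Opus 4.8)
The plan is to prove the cyclic chain of equivalences i)$\Rightarrow$iii)$\Rightarrow$ii)$\Rightarrow$i) and then fold in iv) separately, leaning heavily on the two characterizations already in hand: Theorem \ref{th2} (which converts partial-greediness into the functional inequality \eqref{one}) and Proposition \ref{uni} (which freely interchanges ``partially-symmetric for largest coefficients'' with ``super-conservative $+$ quasi-greedy for largest coefficients''). Since Proposition \ref{uni}b) already gives ii)$\Leftrightarrow$iii) essentially for free once we know the basis is quasi-greedy (note $C_{ql}\le C_{qg}$, so quasi-greediness supplies quasi-greediness for largest coefficients), the real content is the passage between partial-greediness and the pair (quasi-greedy, super-conservative).

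For i)$\Rightarrow$iii), I would first extract quasi-greediness: specializing inequality \eqref{one} (available from Theorem \ref{th2} with $\mathbf D=C_{pg}$) by choosing $z$ so that $f-S_k(f)$ is controlled, one recovers a bound of the form $\Vert P_A(f)\Vert\lesssim\Vert f\Vert$ on greedy sets; the claimed estimate $C_{qg}\le 2^{1/p}C_{pg}$ suggests taking $f$ supported ``after'' the partial-sum cut and using $p$-additivity $\Vert u+v\Vert^p\le\Vert u\Vert^p+\Vert v\Vert^p$ to split off the tail, which produces exactly the factor $2^{1/p}$. For super-conservativeness, I would apply \eqref{one} to test elements of the form $f=\mathbf 1_{\varepsilon A}$ with $A<B$, choosing $z=\mathbf 1_{\varepsilon' B}$ and $k=\max A$ so that $S_k(f)=\mathbf 1_{\varepsilon A}$ collapses the left side to the desired normalized sums; the hypotheses $k<\min\supp(z)$, $k\le|\supp(z)|$ and the coefficient-domination condition in \eqref{one} match precisely the constraints $A<B$, $|A|\le|B|$ in \eqref{sup}, giving $\Delta_s\le\Delta_{pl}\le C_{pg}$.

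For the reverse direction iii)$\Rightarrow$i) (equivalently ii)$\Rightarrow$i)), I would verify the functional inequality \eqref{one} directly and then invoke Theorem \ref{th2}. Given $f,z,k$ as in \eqref{one}, the strategy is to interpose the restricted-truncation/truncation machinery of Theorem \ref{operators}: replace $z$ by a normalized sign-sum $\mathbf 1_{\varepsilon B}$ over $B=\supp(z)$ using part c) of Proposition \ref{conv} (since the coefficients of $z$ dominate those of $f$, after rescaling they have modulus $\le 1$ relative to $\min_{B}|\ee_n^*(z)|$), apply Proposition \ref{uni}a) to bound $\Vert f\Vert$ by $\mathbf A_p\Delta_{pl}\Vert f-S_k(f)+\mathbf 1_{\varepsilon B}\Vert$, and then use the uniform boundedness of $\mathcal T$ to pass back from the normalized sum $\mathbf 1_{\varepsilon B}$ to the original $z$. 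This is where the constant $C_{pg}\le\mathbf A_p\Delta_{pl}\Gamma_t$ comes from: the $\mathbf A_p\Delta_{pl}$ is the cost of Proposition \ref{uni}a), and $\Gamma_t$ pays for the truncation step exchanging $z$ against $\mathbf 1_{\varepsilon B}$.

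The equivalence with iv) is then a short bookkeeping step: partially-symmetric for largest coefficients already implies super-conservative and quasi-greedy-for-largest-coefficients by Proposition \ref{uni}b), so iv) differs from ii)/iii) only in that full quasi-greediness is replaced by uniform boundedness of $\mathcal T$; one direction is Theorem \ref{operators} (quasi-greedy $\Rightarrow\Gamma_t<\infty$), and the other direction extracts quasi-greediness from $\Gamma_t<\infty$ by noting that $\mathcal T(f,A)=\mathcal U(f,A)+P_{A^c}(f)$ controls $P_{A^c}(f)$ and hence, via the quasi-triangle inequality, controls $P_A(f)=f-P_{A^c}(f)$. I expect the main obstacle to be the iii)$\Rightarrow$i) direction, specifically the careful rescaling needed to reconcile the \emph{inhomogeneous} coefficient-domination hypothesis $\max_{\supp(f)}|\ee_n^*(f)|\le\min_{\supp(z)}|\ee_n^*(z)|$ of \eqref{one} with the \emph{normalized} ($\le1$) hypotheses of \eqref{con} and \eqref{three}; getting the truncation operator to absorb the gap between $z$ and its sign-sum $\mathbf 1_{\varepsilon B}$ cleanly, with the advertised constant, is the delicate part, whereas the forward direction and the iv) bookkeeping are largely mechanical.
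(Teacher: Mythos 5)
Your overall architecture is workable, and your hard direction is essentially the paper's argument in disguise: verifying \eqref{one} via Proposition \ref{uni}a) (cost $\mathbf A_p\Delta_{pl}$) plus the truncation operator (cost $\Gamma_t$) is exactly how the paper proves iv)$\Rightarrow$i), where it decomposes $f-P_B(f)=P_{(A\cup B)^c}(f-S_k(f))+P_{A\setminus B}(f)$, flattens $P_{A\setminus B}(f)$ with Proposition \ref{conv}, swaps $t\mathbf 1_{\eta(A\setminus B)}$ for $t\mathbf 1_{\varepsilon(B\setminus A)}$ by \eqref{con}, and finishes with $\|\mathcal T(f-S_k(f),B\setminus A)\|\le\Gamma_t\|f-S_k(f)\|$. (Your appeal to Proposition \ref{conv}c) in that step is the wrong tool -- it only gives upper bounds by unsigned indicator sums and cannot insert $\mathbf 1_{\varepsilon B}$ into $f-S_k(f)+{}\cdot{}$; the exchange of $z$ against $t\mathbf 1_{\varepsilon B}$ with $t=\min_{n\in\supp(z)}\vert\ee_n^*(z)\vert$ \emph{is} the inequality $\|\mathcal T(g,\supp(z))\|\le\Gamma_t\|g\|$ for $g=f-S_k(f)+z$, which you also invoke, so this is a redundancy rather than a fatal error. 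Your extraction of quasi-greediness with $k=0$ and $p$-additivity matches the paper and gives $C_{qg}\le 2^{1/p}C_{pg}$.) The first genuine gap is in your i)$\Rightarrow$iii): choosing $f=\mathbf 1_{\varepsilon A}$, $z=\mathbf 1_{\varepsilon'B}$, $k=\max A$ violates the hypothesis $k\le\vert\supp(z)\vert$ of \eqref{one} whenever $\max A>\vert B\vert$ (take $A=\lbrace 5\rbrace$, $B=\lbrace 10\rbrace$): the constraint in \eqref{one} is on $\max A$, not on $\vert A\vert$, so the hypotheses do \emph{not} ``match precisely'' $A<B$, $\vert A\vert\le\vert B\vert$. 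This cannot be repaired inside \eqref{one}, since padding $z$ below $k$ destroys $k<\min\supp(z)$. The paper sidesteps this by citing \cite{BBL} (Remark 1.10) for $\Delta_{pl}\le C_{pg}$; the underlying fix is a padding trick at the level of the definition \eqref{pg}: with $D=\lbrace 1,\dots,\max A\rbrace\setminus A$ and $h=\mathbf 1_{\varepsilon A}+\mathbf 1_{\varepsilon'B}+\mathbf 1_{D}$, the set $G=B\cup D$ is a greedy set of $h$ with $k=\max A=\vert A\vert+\vert D\vert\le\vert B\vert+\vert D\vert=\vert G\vert$, $h-P_G(h)=\mathbf 1_{\varepsilon A}$ and $h-S_{\max A}(h)=\mathbf 1_{\varepsilon'B}$, whence $\Delta_s\le C_{pg}$ (and, adding a general $f$, $\Delta_{pl}\le C_{pg}$).

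The second gap is in your bookkeeping for iv): the claim that $\Gamma_t<\infty$ controls $P_{A^c}(f)$ via $\mathcal T(f,A)=\mathcal U(f,A)+P_{A^c}(f)$ is invalid, because boundedness of $\mathcal T$ alone gives no bound on $\mathcal U(f,A)$ (boundedness of the truncation operator is strictly weaker than quasi-greediness), so there is nothing to subtract. Under the full hypothesis iv) this is repairable: Proposition \ref{uni}b) yields $C_{ql}<\infty$, and applying \eqref{ql} to $g=P_{A^c}(f)/t$ gives $\|\mathcal U(f,A)\|\le C_{ql}\|\mathcal T(f,A)\|\le C_{ql}\Gamma_t\|f\|$, after which $p$-additivity bounds $P_{A^c}(f)$ and then $P_A(f)$. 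But the detour is unnecessary: your own ``iii)$\Rightarrow$i)'' verification of \eqref{one} uses only $\Delta_{pl}$ and $\Gamma_t$, i.e., precisely the hypotheses of iv), so it already establishes iv)$\Rightarrow$i) directly (as the paper does), and the cycle then closes via i)$\Rightarrow$ii) (quasi-greediness from $k=0$, partial symmetry from the padding argument above) together with Proposition \ref{uni}b) and Theorem \ref{operators} for ii)$\Rightarrow$iii)$\Rightarrow$iv), with quasi-greediness of iv) obtained \emph{a posteriori} from i).
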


\begin{proof}
i)$\Rightarrow$ ii). The argument to show that the basis is partially-symmetric for largest coefficients follows from \cite[Remark 1.10]{BBL}. To show that $\mathcal B$ is quasi-greedy, taking $k=0$ in the definition of partially-greediness, we obtain that
$$\Vert f-P_A(f)\Vert \leq C_{pg}\Vert f\Vert,$$
for any finite greedy set $A$ of $f$. Hence, $\mathcal B$ is quasi-greedy with $C_{qg}\leq 2^{1/p}C_{pg}$.

ii) $\Rightarrow$ iii), follows from the item b) of Proposition \ref{uni}.

iii)$\Rightarrow$ iv), follows from Theorem \ref{operators} and the item b) of Proposition \ref{uni}.

iv)$\Rightarrow$ i). Assume that $\mathcal B$ is $\Delta_{pl}$-partially-symmetric for largest coefficients and the truncation operator is uniformly bounded with constant $\Gamma_t$. Take $f\in\XX$, $P_{B}(f)$ with $B$ a greedy set of $f$ of cardinality $m$ and $A=\lbrace 1,...,k\rbrace$ with $k\leq m$. Then,
$$f-P_B(f)=P_{(A\cup B)^c}(f-S_k(f))+P_{A\setminus B}(f).$$
Thus, taking $t:=\min_{n\in B}\vert\ee_n^*(f)\vert$, applying Proposition \ref{conv},

\begin{eqnarray}\label{u}
\Vert f-P_B(f)\Vert &\leq& \mathbf{A}_p\sup_{\eta\in \Psi_{A\setminus B}}\Vert P_{(A\cup B)^c}(f-S_k(f))+t\mathbf{1}_{\eta(A\setminus B)}\Vert
\end{eqnarray}

\noindent If we select $\varepsilon\equiv\lbrace\sgn(\ee_n^*(f))\rbrace$, applying the fact that $\BB$ is $\Delta_{pl}$-partially-symmetry for largest coefficients in combination with \eqref{u},
\begin{eqnarray*}
\Vert f-P_B(f)\Vert &\stackrel{\eqref{con}+\eqref{u}}{\leq}& \mathbf{A}_p\Delta_{pl}\Vert P_{(A\cup B)^c}(f-S_k(f))+t\mathbf{1}_{\varepsilon(B\setminus A)}\Vert\\
&=&\mathbf{A}_p\Delta_{pl}\Vert\mathcal T(f-S_k(f), B\setminus A)\Vert\\
&\stackrel{\text{Theorem}\, \ref{operators}}{\leq}& \mathbf{A}_p\Delta_{pl}\Gamma_t\Vert f-S_k(f)\Vert.
\end{eqnarray*}
Thus, since the last estimate works for any $k\leq \vert B\vert$ and any finite greedy set $B$, $\mathcal B$ is $C_{pg}$-partially-greedy with $C_{pg}\leq \mathbf{A}_p\Delta_{pl}\Gamma_t$.
\end{proof}

The above theorem gives estimates for the partially-greedy constant
using the partially-symmetry for largest coefficients constant and the truncation operator constant. We complete that result providing estimates of $C_{pg}$ in terms of the quasi-greedy constant
of the basis and some constants related to conservative-like properties.

\begin{theorem}
Let $\mathcal B$ be a basis of a $p$-Banach space $\mathbb X$. Assume that $\mathcal B$ is $C_{qg}$-quasi-greedy. Then,
\begin{itemize}
	\item[i)] If $\mathcal B$ is $\Delta$-conservative, then $\mathcal B$ is $C_{pg}$-partially-greedy with
	$$C_{pg}\leq C_{qg}(1+(\mathbf{A}_p\mathbf{B}_p\Delta C_{qg}\eta_p(C_{qg}))^p)^{1/p}.$$
	\item[ii)] If $\mathcal B$ is $\Delta_s$-super-conservative, then $\mathcal B$ is $C_{pg}$-partially-greedy with
	$$C_{pg}\leq C_{qg}(1+(\mathbf{A}_p\Delta_s\eta_p(C_{qg}))^p)^{1/p}.$$
	\item[iii)] If $\mathcal B$ is $\Delta_{pl}$-partially-symmetric for largest coefficients, then $\mathcal B$ is $C_{pg}$-partially-greedy with
	$$C_{pg}\leq \mathbf{A}_p\Delta_{pl}C_{qg}(1+C_{pg}^p\eta_p^p(C_{qg}))^{1/p}.$$
\end{itemize}
\end{theorem}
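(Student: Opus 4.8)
The plan is to dispatch part iii) from the machinery already assembled and to prove parts i) and ii) directly, extracting sharper constants by replacing the truncation operator $\mathcal{T}$ (and partial symmetry) with the restricted truncation operator $\mathcal{U}$ (and plain (super-)conservativeness). For iii), observe that a basis which is $\Delta_{pl}$-partially-symmetric for largest coefficients and $C_{qg}$-quasi-greedy satisfies the hypotheses of Theorem \ref{th1}(iv): by Theorem \ref{operators} the truncation operator is bounded with $\Gamma_t \le C_{qg}(1 + C_{qg}^p\eta_p^p(C_{qg}))^{1/p}$, and the proof of the implication iv)$\Rightarrow$i) of Theorem \ref{th1} gives $C_{pg} \le \mathbf{A}_p\Delta_{pl}\Gamma_t$. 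Substituting the bound for $\Gamma_t$ yields the asserted estimate, so iii) is essentially immediate.

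For i) and ii) I would verify the defining inequality for greedy sets directly, mirroring the computation in Theorem \ref{th1}. Fix $f \in \XX$, a greedy set $B$ of $f$ with $|B| = m$, an index $k \le m$, and set $A = \{1,\dots,k\}$ and $g = f - S_k(f)$. Starting from the disjoint-support decomposition
\[
f - P_B(f) = P_{(A\cup B)^c}(f) + P_{A\setminus B}(f),
\]
$p$-convexity gives $\|f - P_B(f)\|^p \le \|P_{(A\cup B)^c}(f)\|^p + \|P_{A\setminus B}(f)\|^p$. The first summand equals $g - P_{B\setminus A}(g)$, and since $B\setminus A$ is a greedy set of $g$ it is controlled by quasi-greediness. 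The second summand is where parts i)--ii) improve on iii): the coefficients of $f$ on $A\setminus B$ are bounded by $t := \min_{n\in B}|\ee_n^*(f)|$, while $|A\setminus B| \le |B\setminus A|$ and $A\setminus B < B\setminus A$.

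For part ii) I would apply Proposition \ref{conv}(b) to pass from $P_{A\setminus B}(f)$ to a supremum over signed indicator sums $\mathbf{1}_{\eta(A\setminus B)}$ (cost $\mathbf{A}_p$), then use $\Delta_s$-super-conservativeness to replace $A\setminus B$ by $B\setminus A$ while keeping the signs $\varepsilon = \{\sgn(\ee_n^*(f))\}$; this produces precisely the restricted truncation $\mathcal{U}(g, B\setminus A)$, which Theorem \ref{operators} bounds through $\Gamma_u \le C_{qg}^2\eta_p(C_{qg})$. Collecting both summands via the $p$-additivity above yields the claimed $C_{pg} \le C_{qg}(1 + (\mathbf{A}_p\Delta_s\eta_p(C_{qg}))^p)^{1/p}$. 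For part i) the same scheme applies, but since $\Delta$-conservativeness compares only unsigned indicator sums I would additionally invoke Proposition \ref{conv}(c) to strip and restore the signs (cost $\mathbf{B}_p$) and a sign-normalization step for quasi-greedy bases (cost $C_{qg}$); this is the source of the extra factor $\mathbf{B}_p C_{qg}$ that distinguishes the bound in i) from that in ii).

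The delicate point is the bookkeeping of the powers of $C_{qg}$. The crude route---bounding the greedy remainder by $(1+C_{qg}^p)^{1/p}\|g\|$ via the $p$-triangle inequality and the second summand by $\mathbf{A}_p\Delta_s\Gamma_u\|g\|$---loses one factor of $C_{qg}$ relative to the stated constants. To land on the sharp estimates one must use the co-projection and restricted-truncation bounds in their sharp, single-$C_{qg}$ forms (as extracted from the estimates of \cite{AABW} underlying Theorem \ref{operators}), so that one power of $C_{qg}$ is \emph{shared} between the greedy-remainder term and the $\mathcal{U}$-term rather than accumulated in each. Arranging this sharing correctly, together with the sign conversion in the conservative case, is where the genuine care lies; the remaining disjoint-support $p$-convexity estimates are routine.
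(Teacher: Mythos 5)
Your proposal is correct and follows essentially the same route as the paper: the identical splitting $f-P_B(f)=P_{(A\cup B)^c}(f-S_k(f))+P_{A\setminus B}(f)$ with the first term controlled by quasi-greediness and the second via Proposition \ref{conv} together with (super-)conservativeness and the restricted truncation estimate, and part iii) obtained exactly as in the paper from Theorem \ref{th1} iv)$\Rightarrow$i) combined with the bound on $\Gamma_t$ (your version incidentally confirms that the $C_{pg}^p$ inside the stated bound of iii) should read $C_{qg}^p$). Your closing worry about the powers of $C_{qg}$ is well placed but puts you on equal footing with the paper itself, whose display \eqref{p3} silently invokes the sharp natural-sign estimate $\min_{n\in B\setminus A}\vert\ee_n^*(f)\vert\,\Vert \mathbf{1}_{\varepsilon(B\setminus A)}\Vert\leq C_{qg}\eta_p(C_{qg})\Vert f-S_k(f)\Vert$ from \cite{AABW} even though the quoted Theorem \ref{operators} only states $\Gamma_u\leq C_{qg}^2\eta_p(C_{qg})$, so your reliance on the ``single-$C_{qg}$'' forms from \cite{AABW} is precisely what the paper's own proof does.
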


\begin{proof}
As in Theorem \ref{th1}, take $f\in\XX$, $P_{B}(f)$ with $B$ a finite greedy set of cardinality $m$ and $A=\lbrace 1,...,k\rbrace$ with $k\leq m$. Then,
$$f-P_B(f)=P_{(A\cup B)^c}(f-S_k(f))+P_{A\setminus B}(f).$$

i) On the one hand,  we have that
\begin{eqnarray}\label{p1}
\Vert P_{(A\cup B)^c}(f-S_k(f))\Vert \leq  2^{1/p}C_{qg}\Vert f-S_k(f)\Vert.
\end{eqnarray}
Now, we only need to control $\Vert P_{A\setminus B}(f)\Vert$. 
\begin{eqnarray}\label{p2}
\nonumber\Vert P_{A\setminus B}(f)\Vert &\stackrel{\text{Proposition}\,  \ref{conv}}{\leq}& \mathbf{B}_p\Delta\max_{n\in A\setminus B}\vert\ee_n^*(f)\vert\Vert\mathbf{1}_{B\setminus A}\Vert\\\nonumber
&\leq&\mathbf{B}_p\Delta\min_{n\in B\setminus A}\vert\ee_n^*(f)\vert\Vert\mathbf{1}_{B\setminus A}\Vert\\
&\leq& \mathbf{A}_p\mathbf{B}_p\Delta C_{qg}^2\eta_p(C_{qg})\Vert f-S_k(f)\Vert,
\end{eqnarray}
where the last inequality is due to \cite[Theorem 3.10]{AABW}. Thus, by \eqref{p1} and \eqref{p2}, we obtain the result.

ii) Taking $\varepsilon=\lbrace\sgn(\ee_n^*(f))\rbrace$,

\begin{eqnarray}\label{p3}
\nonumber\Vert P_{A\setminus B}(f)\Vert &\stackrel{\text{Proposition}\,  \ref{conv}}{\leq}& \mathbf{A}_p\Delta_s\max_{n\in A\setminus B}\vert\ee_n^*(f)\vert\Vert\mathbf{1}_{\varepsilon(B\setminus A)}\Vert\\\nonumber
&\leq&\mathbf{A}_p\Delta_s\min_{n\in B\setminus A}\vert\ee_n^*(f)\vert\Vert\mathbf{1}_{\varepsilon(B\setminus A)}\Vert\\
&\stackrel{\text{Theorem}\, \ref{operators}}{\leq}& \mathbf{A}_p\Delta_sC_{qg}\eta_p(C_{qg})\Vert f-S_k(f)\Vert.
\end{eqnarray}
By \eqref{p1} and \eqref{p3}, we obtain the estimate.

iii) Follows from iv) of Theorem \ref{th1} and Theorem \ref{operators}.

The proof is done.
\end{proof}

\section{Renormings of partially-greedy bases}\label{five}

In \cite{DKOSZ2014}, the authors continue with one of the most difficult problems in greedy approximation theory: renorming Banach spaces with greedy bases. In this paper, the authors characterized $1$-greedy bases (generalizing the result proved in \cite{AW2006}) and, also, proved that, for a fixed $\varepsilon>0$, it is possible to find a renorming in $L_p$, $1<p<\infty$, such that the Haar system is $(1+\varepsilon)$-greedy, but it is unsolved if it is possible or not to get the constant $1$. The topic of renorming Banach spaces with greedy bases continue nowadays and one recent paper talking about this theory is \cite{AAW2018}. One of the most important keys in all of these papers is the use of convexity! Also, in Banach spaces, it is well known that a renorming $\Vert \cdot\Vert_0$ of $(\XX,\Vert\cdot\Vert)$ has the form
$$\Vert f\Vert_0=\max\lbrace a\Vert f\Vert, \Vert T(f)\Vert_{\mathbb Y}\rbrace,$$
for some $0<a<\infty$ and some bounded linear operator $T$ from $\XX$ into a Banach space $\mathbb Y$. 

Renorming quasi-Banach spaces with greedy bases has been recently studied in \cite{AABW}, where one of the most important tools is the following lemma:

\begin{lemma}[{\cite[Lemma 11.1]{AABW}}]\label{qu}
Let $(\XX,\Vert \cdot\Vert)$ be a quasi-Banach space $\mathbb X$. Assume that $\Vert\cdot\Vert_0:\XX\longrightarrow [0,+\infty)$ is such that
\begin{enumerate}
	\item $\Vert t\, f\Vert_0=\vert t\vert\,\Vert f\Vert_0$, for every $t\in\FF$, for every $f\in\XX$.
	\item $\Vert f\Vert_0\approx \Vert f\Vert$.
\end{enumerate}
Then, $\Vert \cdot\Vert_0$ is a renorming of $\Vert \cdot\Vert$.
\end{lemma}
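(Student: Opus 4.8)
The plan is to verify directly that $\Vert\cdot\Vert_0$ satisfies the three defining axioms a), b), c) of a quasi-norm, and then to argue that it induces exactly the same topology and the same Cauchy sequences as $\Vert\cdot\Vert$, so that completeness and metrizability are inherited for free. Throughout, write $c_1\Vert f\Vert\le \Vert f\Vert_0\le c_2\Vert f\Vert$ for the constants furnished by hypothesis (2), and let $k$ denote the quasi-norm constant of $\Vert\cdot\Vert$ appearing in axiom c).

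First I would check positivity: if $f\neq 0$ then $\Vert f\Vert>0$ since $\Vert\cdot\Vert$ is a quasi-norm, whence $\Vert f\Vert_0\ge c_1\Vert f\Vert>0$, giving a). Homogeneity b) is precisely hypothesis (1). For the quasi-triangle inequality c), I would simply chain the two-sided equivalence with the quasi-triangle inequality for $\Vert\cdot\Vert$:
$$\Vert f+g\Vert_0\le c_2\Vert f+g\Vert\le c_2 k\left(\Vert f\Vert+\Vert g\Vert\right)\le \frac{c_2 k}{c_1}\left(\Vert f\Vert_0+\Vert g\Vert_0\right),$$
so $\Vert\cdot\Vert_0$ is a quasi-norm with quasi-norm constant at most $c_2 k/c_1$.

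Next I would show that the two quasi-norms define the same topology and the same Cauchy sequences. Because the balls $\lbrace f:\Vert f\Vert_0<\varepsilon\rbrace$ and $\lbrace f:\Vert f\Vert<\varepsilon\rbrace$ are comparable (each contains a rescaling of the other by a factor depending only on $c_1,c_2$), the neighbourhood bases at the origin coincide up to rescaling, and hence the two topologies agree. Likewise, a sequence $(f_n)$ is $\Vert\cdot\Vert_0$-Cauchy if and only if it is $\Vert\cdot\Vert$-Cauchy, since
$$c_1\Vert f_n-f_m\Vert\le \Vert f_n-f_m\Vert_0\le c_2\Vert f_n-f_m\Vert.$$
Therefore completeness transfers from $(\XX,\Vert\cdot\Vert)$ and the metrizable topology is unchanged; thus $(\XX,\Vert\cdot\Vert_0)$ is again a quasi-Banach space whose quasi-norm is equivalent to the original, which is exactly the assertion that $\Vert\cdot\Vert_0$ is a renorming of $\Vert\cdot\Vert$.

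I do not expect any deep obstacle here: the proof is essentially a bookkeeping of the equivalence constants. The one point that deserves a little care, and is the only place the quasi-Banach setting differs from the classical normed one, is that a quasi-norm need not satisfy the ordinary triangle inequality, so one cannot reduce the statement to a direct metric comparison; instead, completeness and the topology must be controlled through the two-sided equivalence in (2). If an explicit compatible metric is desired, the Aoki--Rolewicz theorem quoted above provides an equivalent $p$-norm metrising the common topology.
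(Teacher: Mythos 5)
Your proof is correct, and it is essentially the argument behind the cited result: the paper itself states this lemma without proof, importing it from \cite{AABW}*{Lemma 11.1}, where the content is precisely that homogeneity plus the two-sided equivalence force the quasi-triangle inequality for $\Vert\cdot\Vert_0$ (your chain $\Vert f+g\Vert_0\le c_2\Vert f+g\Vert\le c_2k(\Vert f\Vert+\Vert g\Vert)\le (c_2k/c_1)(\Vert f\Vert_0+\Vert g\Vert_0)$), after which equivalence of the topologies and transfer of completeness are routine. You also correctly isolated the point of the lemma in this setting, namely that no subadditivity of $\Vert\cdot\Vert_0$ is assumed, which is what later permits renormings built from non-linear expressions such as $\Vert f\Vert_a$.
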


This lemma allows us to have renormings of quasi-Banach spaces based on non-linear operators! Here, we follow the ideas of \cite[Theorem 11.3]{AABW} to give a renorming such that $C_{pg}=1$.

\begin{theorem}
Let $\mathcal B$ a partially-greedy basis of a quasi-Banach space $\mathbb X$. Then, there is a renorming of $\XX$ with respect to which $C_{pg}=1$.
\end{theorem}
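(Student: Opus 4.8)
The plan is to manufacture the new quasi-norm directly from the characterization in Theorem~\ref{th2}. Since $\mathbf D=C_{pg}$ there, the basis is $1$-partially-greedy with respect to a quasi-norm $\Vert\cdot\Vert_0$ \emph{precisely} when
$$\Vert f\Vert_0\le \Vert f-S_k(f)+z\Vert_0$$
holds for every admissible triple $(f,z,k)$, i.e.\ for those satisfying $\vert\supp(z)\vert<\infty$, $k<\min\supp(z)$, $k\le\vert\supp(z)\vert$, $\supp(f)\cap\supp(z)=\emptyset$ and $\max_{n\in\supp(f)}\vert\ee_n^*(f)\vert\le\min_{n\in\supp(z)}\vert\ee_n^*(z)\vert$. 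So the whole problem reduces to producing an equivalent, homogeneous $\Vert\cdot\Vert_0$ for which this single inequality holds with constant $1$; Lemma~\ref{qu} then certifies that $\Vert\cdot\Vert_0$ is a genuine renorming of $\XX$, exactly as in \cite[Theorem 11.3]{AABW}.

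The natural candidate is
$$\Vert f\Vert_0:=\sup\bigl\{\Vert g\Vert : g\ \text{admissibly reduces to}\ f\bigr\},$$
where I declare that $g$ admissibly reduces to $f$ if $f=g-S_k(g)+z$ for some triple $(g,z,k)$ admissible in the sense above (the identity reduction $k=0$, $z=0$ being included). First I would dispatch the two requirements of Lemma~\ref{qu}. Homogeneity is immediate, since scaling $g$ and $z$ by $t$ preserves both the supports and all admissibility conditions; the only delicate point is $t=0$, where disjointness of supports forces $z=0$ and $g-S_k(g)=0$, whereupon $k\le\vert\supp(z)\vert=0$ gives $k=0$ and hence $g=0$, so that $\Vert 0\Vert_0=0$. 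For the equivalence $\Vert\cdot\Vert_0\approx\Vert\cdot\Vert$, the lower bound $\Vert f\Vert\le\Vert f\Vert_0$ comes from the identity reduction, while the upper bound $\Vert f\Vert_0\le C_{pg}\Vert f\Vert$ is exactly \eqref{one} applied to each competitor $g$.

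The inequality with constant $1$ is the heart of the matter, and this is where I expect the genuine difficulty. To get $\Vert f\Vert_0\le\Vert f-S_k(f)+z\Vert_0$ one wants every $g$ that admissibly reduces to $f$ to still admissibly reduce to $f':=f-S_k(f)+z$; in other words one needs the admissible-reduction relation to be transitive while the supremum over the resulting ``down-set'' stays uniformly comparable to $\Vert\cdot\Vert$. Transitivity is not automatic: a block $z$ re-added in passing from $f$ to $f'$ may reoccupy coordinates lying in the initial segment deleted when reducing $g$ to $f$, and then $f'$ admits no representation $g-S_{k'}(g)+z'$ with $\supp(z')\cap\supp(g)=\emptyset$. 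Consequently the supremum must be taken over the \emph{correct} closed family, and the tension is real: forcing the re-added support to sit strictly above the current support does restore transitivity and keeps the $k\le\vert\supp(z)\vert$ bookkeeping under control (so equivalence survives), but one must then upgrade this restricted inequality to the full quantification of Theorem~\ref{th2}; conversely, passing to an honest transitive closure of the relation threatens to make the equivalence constant telescope as $C_{pg}^{m}$ along chains of length $m$. Reconciling closure under \emph{all} admissible operations with the uniform boundedness that equivalence demands—so that the constant collapses to exactly $1$—is the main obstacle, and it is precisely here that the quasi-greedy-plus-super-conservative structure underlying partial-greediness (Theorem~\ref{th1}) must be exploited to control how admissible reductions compose.
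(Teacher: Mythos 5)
Your construction is the mirror image of the paper's, and it stalls exactly at the step the paper actually proves. The paper defines $\Vert f\Vert_a=\inf\lbrace\Vert f-S_k(f)+z\Vert : (k,z)\in\mathcal D(f)\rbrace$, an infimum over one-step admissible \emph{successors}, where you take a supremum over admissible \emph{predecessors}; both formulations reduce, via Theorem~\ref{th2} (with $\mathbf D=C_{pg}$) and Lemma~\ref{qu}, to precisely the closure property you isolate: that admissible reductions compose into a single admissible reduction. But at that point your proposal stops. You state the obstruction, weigh two repairs (restricting where mass may be re-added, or passing to a transitive closure at the cost of constants $C_{pg}^m$), and then defer the resolution to the quasi-greedy plus super-conservative structure of Theorem~\ref{th1}. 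That is a genuine gap, and the deferral points in the wrong direction: no greedy-type property is used at this step. The composition is elementary support and cardinality bookkeeping, and the side condition $k\le\vert\supp(z)\vert$ in \eqref{one} --- which your discussion never invokes --- is exactly the clause built into $\mathcal D(f)$ so that the count closes up.

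Concretely, the paper argues: let $(k,z)\in\mathcal D(f)$, $g:=f-S_k(f)+z$, and $(m,y)\in\mathcal D(g)$. With $A=\lbrace 1,\dots,k\rbrace$, $B=\lbrace 1,\dots,m\rbrace$, $B_1=B\cap\supp(f-S_k(f))$ and $B_2=B\cap\supp(z)$ one has
\[
g-S_m(g)+y=f-P_{A\cup B_1}(f)+\bigl(z-P_{B_2}(z)+y\bigr),
\]
and since $k\le\vert\supp(z)\vert$ and $m\le\vert\supp(y)\vert$,
\[
p:=\vert A\cup B_1\vert\le k+m-\vert B_2\vert\le\vert\supp(z)\vert+\vert\supp(y)\vert-\vert B_2\vert=\vert\supp(z-P_{B_2}(z)+y)\vert,
\]
so $(p,\,z-P_{B_2}(z)+y)\in\mathcal D(f)$: the two-step reduction is again a one-step reduction of $f$, and your feared re-occupation of the deleted segment is absorbed by enlarging the cut from $A$ to $A\cup B_1$. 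Hence $\Vert f\Vert_a\le\Vert g-S_m(g)+y\Vert$ for every $(m,y)\in\mathcal D(g)$, so $\Vert f\Vert_a\le\Vert g\Vert_a$, and Theorem~\ref{th2} applied to $(\XX,\Vert\cdot\Vert_a)$ yields $C_{pg}=1$. With this composition lemma in hand, your supremum norm would work just as well --- every predecessor of $f$ becomes a predecessor of $f-S_k(f)+z$, and the equivalence constant stays $C_{pg}$ with no telescoping, precisely because chains collapse to single steps. To be fair, your instinct that the composition requires care is not baseless: the paper verifies only the cardinality condition explicitly and leaves the remaining membership conditions for $\mathcal D(f)$ (position of the new cut relative to $\supp(z-P_{B_2}(z)+y)$, disjointness, and the coefficient comparison) to the reader, and these do deserve a check in the edge cases you describe. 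But they are finitary support and coefficient verifications, not an application of Theorem~\ref{th1}; as submitted, your proof is missing its central step.
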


\begin{proof}
Based on Theorem \ref{th2}, we introduce the following quantity:
$$\Vert f\Vert_a = \inf\lbrace \Vert f-S_k(f)+z\Vert : (k,z)\in\mathcal D(f)\rbrace,$$
where $(k,z)\in\mathcal D(f)$ if  $\vert\supp(z)\vert<\infty$, $k<\min \supp(z)$, $\supp(z)\cap\supp(f)=\emptyset$, $k\leq \vert \supp(z)\vert$ and $\max_{n\in\supp(f)}\vert\ee_n^*(f)\vert\leq\min_{n\in\supp(z)}\vert\ee_n^*(z)\vert$.
By Theorem \ref{th2}, $\Vert f\Vert_a \approx \Vert f\Vert$ for $f\in\XX$, so applying Lemma \ref{qu}, $\Vert \cdot\Vert_a$ is a renorming of $\Vert \cdot\Vert$. 

\noindent Take now $(k,z)\in \mathcal D(f)$ and write $g:=f-S_k(f)+z$ and $A=\lbrace 1,...,k\rbrace$. Take now $(m,y)\in\mathcal D(g)$ and define $B=\lbrace 1,...,m\rbrace$, $B_1= B\cap (\supp(f-S_k(f))$ and $B_2=B\cap \supp(z)$. Then,
$$g-S_m(g)=f-P_{A\cup B_1}(f)+z-P_{B_2}(z).$$
It is clear that $\supp(z-P_{B_2}(z))\cap \supp(y)=\emptyset$ and
\begin{eqnarray*}
p=\vert A\cup B_1\vert &=& k+m-\vert B_2\vert\\
&\leq& \vert\supp(z)\vert+\vert\supp(y)\vert-\vert B_2\vert\\
&=& \vert\supp(z-P_{B_2}(z))\vert+\vert\supp(y)\vert\\
&=&\vert\supp(z-P_{B_2}(z)+y)\vert.
\end{eqnarray*}
We infer that $(p,z-P_{B_2}(z)+y)\in\mathcal D(f)$ and 
$$\Vert f\Vert_a\leq \Vert f-S_p(f)+z-P_{B_2}(z)+y\Vert = \Vert g-S_m(g)+y\Vert.$$
Taking the infimum over $(m,y)$ we get $\Vert f\Vert_a\leq \Vert g\Vert_a$ and, based on the estimates of Theorem \ref{th2}, the proof is done.
\end{proof}
\newpage
\section*{Annex: Summary of  the most important constants}
\begin{table}[ht]
	\begin{center}
		\begin{tabular}{c c c}\hline
			& & \\
			{\bf Symbol}& {\bf Name of constant} &  {\bf Ref. equation}\\
			& & \\
			\hline
			& & \\
			$\Delta_{pl}$ & Partially-symmetry for largest coeffs. constant & \eqref{con} \\ 
			& & \\
			$\Delta_s$ & Super-conservativeness constant& \eqref{sup} \\ 
			& & \\
			$\Delta $ & Conservativeness constant & \eqref{sup} \\ 
			& & \\
			$C_{qg}$         & Quasi-greedy constant & \eqref{qg} \\
			& & \\
			$C_{ql}$ & Quasi-greedy for largest coeffs. constant & \eqref{ql} \\ 
			& & \\
			$C_{pg}$ & Partially-greedy constant &  \eqref{pg} \\ 
			& & \\
			$\Gamma_u$ & Restricted truncation operator constant &  Section \ref{trunc} \\ 
			& & \\
			$\Gamma_t$ & Truncation operator constant &  Section \ref{trunc} \\ 
		\end{tabular}
	\end{center}
\end{table}

\begin{bibsection}
\begin{biblist}

\bib{AA2016}{article}{
author={Albiac, F.},
author={Ansorena, J.~L.},
title={Characterization of 1-quasi-greedy bases},
journal={J. Approx. Theory},
volume={201},
date={2016},
pages={7--12},
}

\bib{AA2017}{article}{
author={Albiac, F.},
author={Ansorena, J.~L.},
title={Characterization of 1-almost greedy bases},
journal={Rev. Mat. Complut.},
volume={30},
date={2017},
number={1},
pages={13--24},
}

\bib{AABW}{article}{
	author={Albiac, F.},
	author={Ansorena, J.~L.},
	author={Bern\'a, P.~M.},
	author={Wojtaszczyk, P.},
	title={Greedy approximation for biorthogonal systems in quasi-Banach spaces},
	journal={Preprint 2019},
}

\bib{AAW2018}{article}{
author={Albiac, F.},
author={Ansorena, J.~L.},
author={Wallis, B.},
title={1-greedy renormings of Garling sequence spaces},
journal={J. Approx. Theory},
volume={230},
date={2018},
pages={13--23},
}  



\bib{AW2006}{article}{
author={Albiac, F.},
author={Wojtaszczyk, P.},
title={Characterization of 1-greedy bases},
journal={J. Approx. Theory},
volume={138},
date={2006},
number={1},
pages={65--86},
}

\bib{Aoki}{article}{
author={Aoki, T.},
title={Locally bounded linear topological spaces},
journal={Proc. Imp. Acad. Tokyo},
volume={18},
date={1942},
pages={588--594},
}

\bib{BB}{article}{
	author={ Bern\'a, P.~M.},
	author={Blasco, \'O.},
	title={Characterization of greedy bases in Banach spaces},
	journal={J. Approx. Theory},
	volume={215},
	date={2017},
	pages={28--39},
}



\bib{BBL}{article}{
	author={Berasategui, M.},
	author={Bern\'a, P.~M.},
	author={Lassalle, S.},
	title={Strong partially greedy bases and Lebesgue type inequalities},
	journal={Preprint (2020)\, \url{https://arxiv.org/pdf/2001.01226.pdf}},
}

\bib{BDKOW}{article}{
author={Bern\'a, P. M.},
author={Dilworth, S. J.},
author={Kutzarova, D.},
author={Oikhberg, T.},
author={Wallis, B.},
title={The weighted Property (A) and the greedy algorithm},
journal={Preprint},
}

\bib{DKK2003}{article}{
author={Dilworth, S.~J.},
author={Kalton, N.~J.},
author={Kutzarova, D.},
title={On the existence of almost greedy bases in Banach spaces},
note={Dedicated to Professor Aleksander Pe\l czy\'nski on the occasion of his 70th birthday},
journal={Studia Math.},
volume={159},
date={2003},
number={1},
pages={67--101},
}

\bib{DKKT2003}{article}{
author={Dilworth, S.~J.},
author={Kalton, N.~J.},
author={Kutzarova, D.},
author={Temlyakov, V.~N.},
title={The thresholding greedy algorithm, greedy bases, and duality},
journal={Constr. Approx.},
volume={19},
date={2003},
number={4},
pages={575--597},
}

\bib{DKOSZ2014}{article}{
author={Dilworth, S.~J.},
author={Kutzarova, D.},
author={Odell, E.},
author={Schlumprecht, Th.},
author={Zs{\'a}k, A.},
title={Renorming spaces with greedy bases},
journal={J. Approx. Theory},
volume={188},
date={2014},
pages={39--56},
}


\bib{KoTe1999}{article}{
author={Konyagin, S.~V.},
author={Temlyakov, V.~N.},
title={A remark on greedy approximation in Banach spaces},
journal={East J. Approx.},
volume={5},
date={1999},
number={3},
pages={365--379},
}

\bib{Rolewicz}{article}{
	author={Rolewicz, S.},
	title={On a certain class of linear metric spaces},
	language={English, with Russian summary},
	journal={Bull. Acad. Polon. Sci. Cl. III.},
	volume={5},
	date={1957},
	pages={471--473, XL},
}

\bib{Wo2000}{article}{
author={Wojtaszczyk, P.},
title={Greedy algorithm for general biorthogonal systems},
journal={J. Approx. Theory},
volume={107},
date={2000},
number={2},
pages={293--314},
}

\end{biblist}
\end{bibsection}
\end{document}